\documentclass[11pt,a4,leqno]{amsart}

\usepackage{amsmath,amssymb,amsfonts,amsthm,enumitem}
\setlist[enumerate]{label=(\roman*)}
\usepackage{hyperref}
\hypersetup{
	colorlinks = true,
	linkcolor = black,
	citecolor = black
}
\usepackage{tikz}
\usepackage{graphicx}
\usetikzlibrary{arrows.meta}

\newtheorem{thm}{Theorem}[section]
\newtheorem{lemma}[thm]{Lemma}
\newtheorem{prop}[thm]{Proposition}
\newtheorem{cor}[thm]{Corollary}
\newtheorem{question}{Question}
\newtheorem{notation}[thm]{Notation}
\theoremstyle{definition}
\newtheorem{df}[thm]{Definition}
\newtheorem{example}[thm]{Example}
\theoremstyle{remark}
\newtheorem{rem}[thm]{Remark}
\numberwithin{equation}{section}
\theoremstyle{plain}
\newcounter{theoremintro}
\newtheorem{thmx}[theoremintro]{Theorem}
\newtheorem{corx}[theoremintro]{Corollary}

\newcommand{\Nb}{\mathbb{N}}
\newcommand{\Zb}{\mathbb{Z}}
\newcommand{\Qb}{\mathbb{Q}}

\newcommand{\Tb}{\mathbb{T}}
\newcommand{\Rb}{\mathbb{R}}

\newcommand{\fF}{{\sf{F}}}
\newcommand{\BD}{{\sf{BD}}}

\renewcommand{\P}{\mathcal{P}}

\begin{document}
      
\begin{abstract}
    We introduce nonnuclear Bunce--Deddens algebras, defined as reduced crossed product $C^*$-algebras associated with free odometer actions of nonamenable countable discrete residually finite groups on a Cantor space. These are nonnuclear counterparts to the generalised Bunce--Deddens algebras introduced by Orfanos. For large classes of groups, we show that they share many regularity properties, including real rank zero, stable rank one, strict comparison of positive elements, and admitting a unique tracial state. In many cases, we deduce that they are even selfless and hence pure. Finally, we compute the $K$-theory of Bunce--Deddens algebras over nonabelian free groups.
\end{abstract}

\keywords{$C^*$-algebra, crossed product, Bunce--Deddens, nonnuclear}

\subjclass[2020]{Primary 46L80, 46L55}
 
\title{Nonnuclear Bunce--Deddens algebras}
\author{Jamie Bell}
\address{Jamie Bell, Mathematical Institute, University of M\"unster, Einsteinstrasse 62, 48149 M\"unster, Germany.}
\email{jbell@uni-muenster.de}

\thanks{Funded by the Deutsche Forschungsgemeinschaft (DFG, German Research Foundation) under Germany's Excellence Strategy EXC 2044/2 –390685587, Mathematics M\"unster: Dynamics-Geometry-Structure and project-ID 427320536, SFB 1442, of the DFG}

\maketitle

\section{Introduction}

In 1975, John W. Bunce and James A. Deddens constructed the first known examples of simple unital nuclear $C^*$-algebras that are not AF \cite{BunDed75}. These $C^*$-algebras, now known eponymously as Bunce--Deddens algebras, were originally defined as certain simple quotients of algebras of weighted shift operators, but admit a natural dynamical description as the crossed product $C^*$-algebras associated with $\Zb$-odometer actions on a Cantor space. 

Bunce--Deddens algebras are closely related to UHF algebras. Indeed, just as UHF algebras are classified by their supernatural numbers, as proved by Glimm \cite{Gli60}, Bunce--Deddens algebras are classified up to isomorphism by the supernatural numbers associated with the odometers from which they are constructed.\footnote{This is not a coincidence; as later proved by Putnam \cite{Put89}, every Bunce--Deddens algebra contains an orbit-breaking UHF subalgebra of the same supernatural number, and this inclusion induces an order isomorphism of $K_0$-groups.} They emerged just as the rich infusion of $K$-theoretic methods into operator algebras was unfolding \cite{Ell76,PimVoi80,Rie81}. From a modern viewpoint, Bunce--Deddens algebras provide tractable examples of simple separable nuclear $C^*$-algebras that are classified by their Elliott invariant, belonging to the class of A$\Tb$ algebras with real rank zero later classified by Elliott and Elliott--Gong \cite{Ell93,EllGon96}. We refer to \cite{Win19,Whi22} for surveys on the current status and history of the Elliott classification programme.

Let us briefly recall the classical construction of $\Zb$-odometers. Take a strictly decreasing sequence $n_1\Zb\ge n_2\Zb \ge \cdots$ of finite-index subgroups of $\Zb$ -- equivalently, a strictly increasing sequence of natural numbers $n_i \in \Nb$ such that $n_i \mid n_{i+1}$ for all $i \in \Nb$. The associated odometer is the action $\Zb \curvearrowright \varprojlim \Zb/n_i\Zb$ given by coordinatewise left-translation on the profinite completion of $\Zb$ determined by the chosen subgroups. Writing $v_p(n_i)$ for the $p$-adic valuation of $n_i$ at a prime $p$, the crossed product associated with this action is (isomorphic to) the Bunce--Deddens algebra whose supernatural number is the formal product 
\[
    \mathfrak{n} = \prod_{p} p^{m_p}, \quad  m_p=\sup_i v_p(n_i)\in \Zb_{\ge 0}\cup\{\infty\}. 
\]
Orfanos \cite{Orf10} introduced the nuclear generalised Bunce--Deddens algebras. Mutatis mutandis, these are obtained from the odometer construction above by replacing the chains of finite-index  subgroups of $\Zb$ with chains of finite-index normal subgroups of amenable residually finite groups with trivial intersection -- such chains we call normal and separating -- and taking the crossed products of the resulting odometer actions (Definition~\ref{D:BD}). Building on the work of Phillips \cite{Phi05}, Orfanos proved that these $C^*$-algebras retain many of the regularity properties of the classical Bunce--Deddens algebras; they are simple, nuclear and quasidiagonal, and have real rank zero, stable rank one, comparison of projections, and a unique tracial state.

Bunce--Deddens algebras and their nuclear generalisations have found a number of applications, demonstrating their enduring utility. In addition to providing further examples of Elliott-classifiable $C^*$-algebras \cite{Car11}, they serve, in some cases, as counterexamples to Matui's HK conjecture \cite{Sca20,Dee23}, and satisfy a number of orbit equivalence rigidity results \cite{CorMed16,GioPutSka19}. Odometer-like actions were also used to give the first examples of minimal topologically free actions of amenable groups with zero mean dimension that are not almost finite \cite{Jos24b} (cf.~\cite{GarGefGesKopNar24}), and they appear implicitly in the construction of $C^*$-diagonals in Cuntz algebras \cite{EviSib26}.  

In light of their importance in the study of nuclear $C^*$-algebras, in this article we record what happens when the same construction of generalised Bunce--Deddens algebras is applied to odometer actions of nonamenable residually finite groups, with a particular emphasis on the case of nonabelian free groups. While the methods used by Orfanos -- which rely crucially on the amenability of the acting group in the form of F{\o}lner tilings -- break down in our setting, we show that these algebras nevertheless frequently retain a number of key regularity properties. The clearest picture emerges for Bunce--Deddens algebras over free groups. 

\begin{thmx}\label{T:A}
    Fix $d\ge 2$. Let $\sigma$ be a separating normal chain in $F_d$. Then the Bunce--Deddens algebra $\BD (F_d,\sigma)$ is a simple, separable, unital, nonnuclear, non-$\mathcal{Z}$-stable $C^*$-algebra with real rank zero, stable rank one and a unique tracial state, which is moreover selfless (in the sense of Robert \cite{Rob25}), and hence pure (in the sense of Winter \cite{Win12}). 
\end{thmx}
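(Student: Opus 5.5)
The plan is to write $X=\varprojlim F_d/N_i$ for the odometer space, where $\sigma=(N_i)$, and $A=\BD(F_d,\sigma)=C(X)\rtimes_r F_d$. Each property will be handled by a separate, fairly standard route, so I take them in turn.

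\textbf{Basic properties.} Separability and unitality are immediate, since $X$ is a second countable compact space and $F_d$ is countable.
\begin{itemize}
\item[] The action is minimal, because it is transitive on each finite quotient $F_d/N_i$.
\item[] It is free, because the chain is normal and separating. If $g\neq e$, then $g\notin N_i$ for some $i$. Since $N_i$ is normal, $g$ acts without fixed points on $F_d/N_i$, and hence without fixed points on $X$.
\item[] Simplicity then follows from Archbold--Spielberg, as minimality plus topological freeness give simplicity of the reduced crossed product.
\item[] Nonnuclearity holds because a nuclear reduced crossed product forces the action to be amenable. An amenable action on a space carrying an invariant probability measure (here the Haar measure $\mu$) forces the group itself to be amenable, and $F_d$ is not.
\end{itemize}

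\textbf{Traces.} The measure $\mu$ is the unique invariant probability measure, since it is the only one that is uniform on every quotient $F_d/N_i$. Because the action is free, every tracial state of $A$ factors through the canonical conditional expectation $E\colon A\to C(X)$. This also holds for reduced crossed products: freeness gives $\tau(fu_g)=0$ for $g\neq e$, by cutting $f$ down with functions whose supports are moved off themselves by $g$. So $\tau=\mu\circ E$ is the unique tracial state. Non-$\mathcal Z$-stability should follow from a Kirchberg-type result: a simple, unital, exact, $\mathcal Z$-stable $C^*$-algebra with a unique trace has a hyperfinite trace. Here $A$ is exact because $F_d$ is exact, and the GNS von Neumann algebra of $\tau$ is $L^\infty(X)\rtimes F_d$. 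This is a free ergodic profinite action of $F_d$, so the crossed product is a II$_1$ factor that is not amenable, because it contains $L(F_d)$ with a conditional expectation. That contradiction shows $A$ is not $\mathcal Z$-stable. An alternative route is that $\mathcal Z$-stability with a unique trace would force the trace to be amenable, which would in turn force $F_d$ to be amenable.

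\textbf{Selflessness, pureness and stable rank one.} For selflessness I would use the criteria in the literature for reduced crossed products by groups with strong free-probability properties. Specifically, I would take the recent results showing that $C(X)\rtimes_r G$ is selfless whenever $G$ is, for example, acylindrically hyperbolic with trivial finite radical, acting freely and minimally with a unique invariant measure (Amrutam--Gao--Kunnawalkam Elayavalli--Patchell and Ozawa, plus follow-ups). I would check that their hypotheses (free, minimal, unique trace, $F_d$ satisfying the needed rapid-decay or free-independence condition) hold here. Robert's theorem then gives strict comparison, stable rank one, and pureness from selflessness.

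\textbf{Real rank zero.} This is the main obstacle, since selflessness by itself does not give real rank zero. The plan is to use the fact that a simple, unital, exact, stable-rank-one $C^*$-algebra with strict comparison and a unique trace has real rank zero as soon as the image of $K_0$ is dense in $\Rb$ (equivalently, there are projections of arbitrarily small trace), combined with pureness / Rørdam-style results. Such projections exist here: the indicator of a cylinder set at level $i$ is a projection in $C(X)$ with trace $1/[F_d:N_i]\to0$. What remains is to make the implication ``strict comparison plus stable rank one plus dense $K_0$ image implies real rank zero'' rigorous in the nonnuclear pure setting. I would try the approach of Perera, or the approach of Ng--Robert, via divisibility of the Cuntz semigroup: the Cuntz semigroup is almost unperforated and almost divisible, so it is determined by $V(A)$ together with lower semicontinuous affine functions. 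Projections of small trace then let one approximate self-adjoints by ones with finite spectrum.
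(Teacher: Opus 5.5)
Your treatment of simplicity, separability, nonnuclearity and the unique trace is fine and essentially matches the paper. Two steps need fixing: the non-$\mathcal{Z}$-stability argument is wrong, and the selflessness step is unsupported.

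\textbf{Non-$\mathcal{Z}$-stability.} The ``Kirchberg-type'' fact you invoke is false, and so is the amenable-trace variant. Consider $C^*_\lambda(F_2)\otimes\mathcal{Z}$. It is simple, unital, exact, $\mathcal{Z}$-stable and monotracial, yet its tracial GNS closure is $L(F_2)\bar\otimes\mathcal{R}$, which is not injective. Its trace also restricts to the canonical trace on $C^*_\lambda(F_2)$, which is not amenable, since a hypertrace would restrict to an invariant mean on $\ell^\infty(F_2)$. So non-injectivity of $L^\infty(X)\rtimes F_d$ gives no contradiction. What $\mathcal{Z}$-stability actually gives is that $\pi_\tau(A)''$ is McDuff, i.e.\ $\pi_\tau(A)''\cong\pi_\tau(A)''\bar\otimes\mathcal{R}$ (see \cite[Proposition 5.17]{CarCasEviGabSchTikWhi24}). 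You then need an obstruction to \emph{that}. Ruling out property Gamma does not supply one in general: odometers of $F_d$ that factor, via an epimorphism $F_d\to\Zb$, onto a $\Zb$-odometer are not strongly ergodic, so their crossed products have property Gamma. The missing input is the Deprez--Vaes theorem that a group admitting a free ergodic p.m.p.\ action with McDuff crossed product is inner amenable \cite[Proposition 4.1]{DepVae18}. Combine it with Effros' theorem that $F_d$ is not inner amenable \cite{Eff75}; this is how the paper argues.

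\textbf{Selflessness.} Here you appeal to a crossed-product selflessness theorem that you neither identify nor check, and the hypotheses you list (``rapid decay or free-independence'', unique invariant measure) are vague. Stable rank one, pureness and real rank zero all rest on this step, so it is a real hole. It is also unnecessary; the missing idea is the finite-stage decomposition.
\begin{enumerate}
\item Writing $X=\varprojlim F_d/N_i$, continuity of reduced crossed products gives $A\cong\varinjlim C(F_d/N_i)\rtimes_\lambda F_d$.
\item Each stage satisfies $C(F_d/N_i)\rtimes_\lambda F_d\cong M_{m_i}(C^*_\lambda(N_i))$ with $m_i=[F_d:N_i]$ \cite[Proposition 2.3]{Sca20}.
\item By Nielsen--Schreier, $N_i\cong F_{1+m_i(d-1)}$, and reduced free group $C^*$-algebras are selfless (e.g.\ by \cite{Oza25}).
\item Selflessness passes to matrix amplifications and sequential inductive limits \cite[Theorems 4.1 and 4.3]{Rob25}. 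So $A$ is selfless, hence pure, with stable rank one and strict comparison \cite[Theorem 3.1]{Rob25}.
\end{enumerate}

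With that in place, your real rank zero step is correct and needs no Perera or Ng--Robert machinery. The implication you were unsure about is exactly \cite[Proposition 7.1]{Ror04}, which applies since $A$ is exact ($F_d$ is exact). Your density argument is in fact simpler than the paper's, which computes $K_0(A)\cong\Qb_\sigma$ via Pimsner--Voiculescu. The cylinder projections in $C(X)$ already give $\frac{1}{m_i}\Zb\subseteq\tau_*(K_0(A))$ for every $i$, and this works for every Bunce--Deddens algebra.
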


Variations of Theorem~\ref{T:A} may also be derived for a much larger class of groups from results in Section~\ref{S:regularity}, though we note that not all of the properties hold (or are known to hold) for every such group. The proof of Theorem~\ref{T:A} primarily relies on a well-known decomposition of Bunce--Deddens algebras into inductive limits of matrix amplifications of reduced group $C^*$-algebras, combined with a number of recent results surrounding the study of pureness and selflessness, following breakthroughs in \cite{Rob25,AmrGaoKunPat25,Oza25}. 

While parts of Theorem~\ref{T:A} are likely known to some experts, we hope that our presentation makes these examples also transparent to nonspecialists. One point of emphasis throughout our article is that, while Bunce--Deddens algebras will always be simple, several arguments do not rely on simplicity; accordingly, we formulate a number of our results without imposing freeness or minimality assumptions. This is in stark contrast to the amenable case. 

Our approach to proving real rank zero in Theorem~\ref{T:A} is to apply R{\o}rdam's criterion \cite{Ror04}, which requires verifying a certain $K$-theoretic condition. As such, we are led to compute the $K$-theory of Bunce--Deddens algebras over free groups.  

\begin{thmx}\label{T:B}
    Fix $d\ge 2$. Let $\sigma = (G_n)_{n\in\Nb}$ be a separating normal chain in $F_d$. Then, 
    \[
        K_0(\BD(F_d,\sigma)) \cong \bigcup_{n\in\Nb} \frac{1}{[F_d : G_n]}\Zb\subseteq \Qb,
    \]
    via an isomorphism sending $[1]_0$ to $1$, and 
    \[
        K_1(\BD(F_d,\sigma)) \cong \varinjlim \Zb^{1 + [F_d:G_n](d-1)}. 
    \]
\end{thmx}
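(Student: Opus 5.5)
The plan is to realise $\BD(F_d,\sigma)$ as an inductive limit and compute $K$-theory stage by stage. The odometer space $X=\varprojlim F_d/G_n$ has clopen partitions $\{U_{gG_n}\}$, so $C(X)=\varinjlim C(F_d/G_n)$ and
\[
\BD(F_d,\sigma)\cong\varinjlim C(F_d/G_n)\rtimes_r F_d .
\]
Reduced crossed products commute with such inductive limits because the connecting maps are equivariant and injective. Each stage is an induced algebra: $C(F_d/G_n)\rtimes_r F_d\cong M_{k_n}(C^*_r(G_n))$ with $k_n=[F_d:G_n]$, by Green's imprimitivity theorem (or a direct computation with the matrix units $1_{gG_n}\lambda_{gh^{-1}}$ over a transversal). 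By Nielsen--Schreier, $G_n$ is free of rank $r_n=1+k_n(d-1)$. Pimsner--Voiculescu then gives $K_0(C^*_r(G_n))=\Zb[1]$ and $K_1(C^*_r(G_n))\cong\Zb^{r_n}$, generated by the unitaries of a free basis. Continuity of $K$-theory reduces everything to identifying the connecting maps.

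For $K_0$, note that the unital map $M_{k_n}(C^*_r(G_n))\to M_{k_{n+1}}(C^*_r(G_{n+1}))$ sends $[1]$ to $[1]$. The unit at stage $n$ is $k_n$ times the generator $[e_{11}]=[1_{G_n}]$. So on $K_0\cong\Zb$ the connecting map is multiplication by $k_{n+1}/k_n$. The cleanest way to see this is through the canonical trace $\tau$: it restricts to the normalised trace at each stage, and $\tau(1_{G_n})=1/k_n$ because the Haar measure of $U_{G_n}$ is $1/k_n$. Hence $\tau_*$ identifies $K_0$ at stage $n$ with $\frac1{k_n}\Zb$, injectively and compatibly, which gives $K_0\cong\bigcup_n\frac1{k_n}\Zb$ with $[1]_0\mapsto1$.

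For $K_1$ the stages are $\Zb^{r_n}$, so $K_1\cong\varinjlim\Zb^{1+k_n(d-1)}$ with the induced connecting maps. The statement asks only for this form, so I do not need the maps explicitly. One could describe them as a transfer/corestriction map $H_1(G_n)\to H_1(G_{n+1})$ twisted through the Morita equivalences, but that is not required. The point to verify is that the isomorphisms $K_1(C(F_d/G_n)\rtimes_r F_d)\cong\Zb^{r_n}$ are natural enough that the limit is a limit of groups of these ranks, and this is automatic.

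The main obstacle is carefully justifying the stage decomposition $C(F_d/G_n)\rtimes_r F_d\cong M_{k_n}(C^*_r(G_n))$ in the reduced setting, together with compatibility of the inductive limit with reduced crossed products. I would handle it directly. Choose coset representatives $t_1,\dots,t_{k_n}$. Set $e_{ij}=1_{t_iG_n}\lambda_{t_it_j^{-1}}1_{t_jG_n}$, and check that these form matrix units whose corner $e_{11}(\cdot)e_{11}$ is generated by $\lambda_{G_n}$. Then verify that the reduced norm agrees, using the conditional expectation onto $C(F_d/G_n)$, which restricts to the canonical trace-preserving expectation on $C^*_r(G_n)$. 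Injectivity of the inductive limit maps follows from the faithful expectations, since the reduced norms are determined by these expectations.
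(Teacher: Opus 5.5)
Your proposal is correct and follows essentially the same route as the paper. Both arguments use the inductive-limit decomposition $\BD(F_d,\sigma)\cong\varinjlim M_{[F_d:G_n]}(C^*_\lambda(G_n))$, Nielsen--Schreier together with Pimsner--Voiculescu at each stage, and continuity of $K$-theory. Both also use the observation that the unital connecting maps on $K_0\cong\Zb$ are multiplication by $[G_n:G_{n+1}]$, while the $K_1$ limit is left with unspecified connecting maps.

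Your use of $\tau_*$ to identify stage $n$ with $\frac{1}{[F_d:G_n]}\Zb$ just repackages the paper's unit-counting argument (the paper uses this trace identification only later, for real rank zero). Your explicit matrix-unit check of the stage decomposition fills in what the paper cites from Scarparo.
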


\begin{corx}\label{C:C}
    For each $d\ge 2$, there exist continuum many pairwise nonisomorphic Bunce--Deddens algebras over $F_d$. 
\end{corx}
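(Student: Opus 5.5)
The plan is to distinguish the algebras by their ordered $K_0$-group with the class of the unit, via Theorem~\ref{T:B}. An isomorphism $\BD(F_d,\sigma)\cong\BD(F_d,\sigma')$ induces an isomorphism $K_0(\BD(F_d,\sigma))\cong K_0(\BD(F_d,\sigma'))$ that preserves $[1]_0$. By Theorem~\ref{T:B}, such a map is a group isomorphism between the subgroups $\bigcup_n \frac{1}{[F_d:G_n]}\Zb$ and $\bigcup_n\frac{1}{[F_d:G'_n]}\Zb$ of $\Qb$ sending $1$ to $1$. Any group homomorphism between subgroups of $\Qb$ containing $\Zb$ is multiplication by a rational number. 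Since $1\mapsto 1$, that number is $1$, so the two subgroups coincide. A subgroup of $\Qb$ of the form $\bigcup_n \frac{1}{k_n}\Zb$ with $k_n\mid k_{n+1}$ determines the supernatural number $\prod_p p^{\sup_n v_p(k_n)}$, and conversely. It therefore suffices to produce separating normal chains in $F_d$ realising continuum many distinct supernatural numbers of indices.

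For the construction, I would fix an infinite set $P$ of primes and, for each infinite subset $S\subseteq P$, build a separating normal chain whose indices have supernatural number $\prod_{p\in S}p^\infty$. There are continuum many such $S$, and they give pairwise distinct supernatural numbers. To build the chain, enumerate $S$ with each prime repeated infinitely often, as $p_1,p_2,\dots$. I would use the standard fact that $F_d$ is residually $p$ for every prime $p$: for each $p$ it has a separating descending chain of finite-index normal subgroups of $p$-power index, for instance its lower exponent-$p$ central series. Call these $N^{(p)}_1\ge N^{(p)}_2\ge\cdots$. Now set
\[
G_n=\bigcap_{k\le n} N^{(p_k)}_{m_k(n)},
\]
where $m_k(n)$ is the number of $j\le n$ with $p_j=p_k$.

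This $G_n$ is a finite intersection of finite-index normal subgroups, so it is normal of finite index, and the chain is decreasing. By the Chinese-remainder-type fact that the quotient $F_d/G_n$ embeds in the product of the $p$-group quotients, the index $[F_d:G_n]$ divides a product of powers of primes in $S$. The index is also divisible by each $[F_d:N^{(p)}_m]$ that occurs. Every prime of $S$ appears infinitely often, so each exponent is unbounded, and primes outside $S$ never divide the index. Hence the supernatural number is exactly $\prod_{p\in S}p^\infty$. The chain is separating because $\bigcap_n G_n\subseteq \bigcap_m N^{(p)}_m=\{e\}$ for any fixed $p\in S$. It is strictly decreasing as well, because the indices grow without bound.

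The main obstacle is a hypothesis check rather than the counting. The definition of separating normal chain (Definition~\ref{D:BD}) may demand additional properties, such as strict decrease, and I would need to confirm the construction meets all of them. I would also need to ensure that the residual-$p$ chains used do have $p$-power index with unbounded exponents. Both hold for the lower exponent-$p$ central series, by standard results on free groups.
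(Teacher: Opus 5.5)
Your proposal is correct and follows essentially the paper's route. Both arguments reduce via Theorem~\ref{T:B} to realising continuum many distinct groups $\Qb_\sigma$. Both control the primes dividing $[F_d:G_n]$ by intersecting finite-index normal subgroups and using the embedding $F_d/(A\cap B)\hookrightarrow F_d/A\times F_d/B$.

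The differences are minor. The paper uses a single residually-$2$ chain for separation, and introduces the primes of $S$ (with $2\in S$) through the cyclic quotients $\chi^{-1}(N_n\Zb)$ of an epimorphism $\chi\colon F_d\to\Zb$. You instead use residually-$p$ chains for every $p\in S$, which handle separation and prime content at once. Your unit-preserving argument, which forces the two subgroups of $\Qb$ to be equal, is also a slightly cleaner way to tell the algebras apart than the paper's appeal to abstract nonisomorphism of the groups $\Zb[1/q : q\in S]$.

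One slip: unbounded indices do not give strict decrease at every step. With the lower exponent-$p$ central series, $N^{(p)}_1=F_d$, so the first occurrence of a prime leaves $G_{n+1}=G_n$. This does no harm, since the paper's definition of a chain does not require strictness, and strictness could anyway be arranged by passing to a subsequence.
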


By results of Cortez and Medynets \cite{CorMed16}, the Bunce--Deddens algebras $\BD (G,\sigma)$ and $\BD (H,\rho)$ are isomorphic via an isomorphism preserving their canonical Cantor-spectrum diagonals if and only if the odometer actions $G\curvearrowright \widehat{G}_\sigma$ and $H\curvearrowright \widehat{H}_\rho$ are structurally conjugate (see \cite[Definition 3.1]{CorMed16}); this is also equivalent to the existence of an abstract isomorphism of the topological full groups $\fF (G,\widehat{G}_\sigma)$ and $\fF (H,\widehat{H}_\rho)$. For $\Zb$-odometers, structural conjugacy is equivalent to flip conjugacy. It seems interesting to determine to what extent the isomorphism classes of nonnuclear Bunce--Deddens algebras (forgetting the position of the diagonal) can be understood.  

\medskip
\noindent\textit{Structure}. In Section~\ref{S:prelims}, we cover preliminaries on odometer actions and record the decomposition theorem for their crossed products. In Section~\ref{S:construction}, we define the nonnuclear Bunce--Deddens algebras and record some of their basic properties. Section~\ref{S:regularity} contains the proofs of most of the regularity properties, and finally in Section~\ref{S:Kthy} we present the $K$-theory computations, which permit us to prove Theorems~\ref{T:A} and \ref{T:B}, and Corollary~\ref{C:C}. 

\medskip
\noindent\textit{Acknowledgements.} I am particularly grateful to Julian Kranz, discussions with whom inspired the present article. I thank Sven Raum and Jonathan Taylor for their hospitality during a visit to the University of Potsdam, where ideas related to this work were discussed. I am also indebted to Julian Kranz and Spyros Petrakos for their valuable comments on an earlier draft. 

\section{Preliminaries}\label{S:prelims}

\subsection{Group actions} An \emph{action} $G \stackrel{\alpha}{\curvearrowright} X$ of a countable discrete group $G$ on a compact Hausdorff space $X$ is a group homomorphism $\alpha : G \to \operatorname{Homeo}(X)$. For $g\in G$ and $x\in X$, we write $\alpha_gx := \alpha(g)(x)$, or simply $gx$ when the action is understood. The action $G\curvearrowright X$ is \emph{minimal} if there exist no nonempty proper closed subsets $Y\subseteq X$ such that $GY := \{gy : g\in G, y\in Y\}\subseteq Y$; equivalently, every orbit $Gx := \{gx : g\in G\}$ is dense in $X$. We write $X/G$ for the \emph{orbit space}, which is the quotient of $X$ by the relation $x\sim y$ if and only if $Gx = Gy$, $x,y\in X$. The action is \emph{(topologically) transitive} if for all nonempty open sets $U,V\subseteq X$ there exists $g\in G$ such that $gU\cap V\ne\emptyset$. If $X$ is finite, $G\curvearrowright X$ is transitive if and only if for every $x,y\in X$ there exists $g\in G$ such that $gx = y$. For $x\in X$, we write $G_x:= \{g\in G: gx = x\}$ for the stabiliser subgroup of $G$. The action is \emph{free} if $G_x = \{e\}$ for all $x\in X$. Two actions $G \curvearrowright X$ and $G \curvearrowright Y$ are \emph{conjugate} if there exists a homeomorphism $\varphi : X\to Y$ such that $\varphi(gx) = g\varphi(x)$ for all $g\in G$ and $x\in X$. We write $M_G(X)$ for the weak$^*$-compact convex set of $G$-invariant Borel probability measures on $X$. The action is \emph{uniquely ergodic} if $M_G(X)$ is a singleton.

\subsection{Stone spaces} Recall that a nonempty topological space $X$ is \emph{zero-dimensional} if there exists a basis of clopen sets for its topology. If $X$ is a compact Hausdorff space, this is equivalent to being totally disconnected, i.e.\ every subset with at least two points being disconnected. A \emph{Stone space} is a zero-dimensional compact Hausdorff space. Stone spaces are often called \emph{profinite}, since they are precisely the spaces that are homeomorphic to the projective limit of a directed system of finite discrete spaces. In the present article, we will be interested in actions $G\curvearrowright X$ of countable discrete groups on metrisable Stone spaces.\footnote{Most things also work just fine in the nonmetrisable case with sufficient care (though one will get a nonseparable $C^*$-algebra), but we do not pursue this here.} A Stone space is metrisable if and only if it is homeomorphic to the projective limit over a countable directed set. In summary, for every metrisable Stone space $X$, there exists a sequence $(X_n)_{n\in\Nb}$ of finite sets and surjective connecting maps $\varphi_n : X_{n+1} \to X_n$ such that
\[
    X \cong \varprojlim X_n := \Big\{ (x_n)_{n\in \Nb} \in \prod_{n\in \Nb} X_n : \varphi_n(x_{n+1}) = x_n \text{ for all } n \in \Nb\Big\}.
\]
Suppose that a group $G$ acts on each $X_n$, and that $\varphi_n$ is $G$-equivariant for every $n\in\Nb$. Then the diagonal action $G\curvearrowright \prod_{n\in\Nb} X_n$ given coordinatewise by $g (x_n)_{n\in \Nb} = (gx_n)_{n\in\Nb}$ restricts to a well-defined action on the closed $G$-invariant subset $\varprojlim X_n$.  

\subsection{Profinite completions} A topological group whose underlying space is a Stone space is called a \emph{profinite group}. Equivalently, a profinite group is a projective limit of finite groups, which is metrisable if and only if the index set can be taken to be countable. Let $\sigma = (G_1\ge G_2\ge \cdots)$ be a \emph{chain} in $G$, i.e., a descending sequence of finite-index subgroups. The associated \emph{profinite completion} with respect to $\sigma$ is 
\[
    \widehat{G}_\sigma := \varprojlim G/G_n,
\]
where the connecting maps are given by $gG_{n+1} \mapsto gG_n$. When the chain $\sigma$ is \emph{normal}, i.e., all of the finite-index subgroups $G_n$ are normal, then $\widehat{G}_\sigma$ is a profinite group. In this case, there is a canonical homomorphism $G \to \widehat{G}_\sigma$ given by $g\mapsto (gG_n)_{n\in \Nb}$ whose kernel is $\bigcap_{n\in \Nb} G_n$. In particular, this map is injective if and only if the chain is \emph{separating}, i.e., $\bigcap_{n\in \Nb} G_n = \{e\}$. A countable group $G$ is \emph{residually finite} if for every $g\in G\setminus \{e\}$ there exists a finite-index normal subgroup $N\unlhd G$ such that $g\not\in N$. Equivalently, $G \hookrightarrow \widehat{G}_\sigma$ for some normal chain $\sigma$.  

\subsection{Equicontinuous actions and odometers}

A group action $G\curvearrowright X$ on a metrisable space is \emph{equicontinuous} if the collection of homeomorphisms determined by the action is uniformly equicontinuous, i.e.\ for every $\varepsilon > 0$ there exists $\delta > 0$ such that $d(gx,gy) < \varepsilon$ whenever $d(x,y)\le \delta$, for $g\in G$ and $x,y\in X$.

Equicontinuous actions on Stone spaces admit a number of equivalent characterisations. We refer to \cite[Theorem 2.3]{Pil23} for a proof (for the minimal case, see also \cite{CorPet08,CorMed16,HawSkaWhiZac13,Pil23,CorHau26}).     

\begin{prop}\label{P:equi}
    Let $G\curvearrowright X$ be an action of a countable discrete group on a metrisable Stone space. The following are equivalent:
    \begin{enumerate}
        \item the action is equicontinuous;
        \item there exists a compatible metric on $X$ with respect to which the action is isometric;
        \item there exist finite sets $(X_n)_{n\in \Nb}$, actions $G\curvearrowright X_n$, and $G$-equivariant connecting maps $\varphi_n : X_{n+1} \to X_n$ such that $G\curvearrowright X$ is conjugate to $G\curvearrowright \varprojlim X_n$.
    \end{enumerate}
\end{prop}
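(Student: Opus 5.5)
I would prove the cycle (iii)$\Rightarrow$(ii)$\Rightarrow$(i)$\Rightarrow$(iii). The implication (ii)$\Rightarrow$(i) is immediate, since an isometric action is trivially uniformly equicontinuous (take $\delta=\varepsilon$). Equicontinuity is a property of the uniform structure, and on a compact metrisable space all compatible metrics induce the same uniformity. So it does not matter which compatible metric is used in (i).

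For (iii)$\Rightarrow$(ii), I work on $\varprojlim X_n$ and transport everything by the conjugacy. I put the ultrametric
\[
d\big((x_n),(y_n)\big)=2^{-\min\{n:\,x_n\neq y_n\}}
\]
on it, with $d=0$ if $x=y$. This metric is compatible with the product topology restricted to the inverse limit, because its balls are exactly the cylinder sets determined by finitely many coordinates. Since $G$ acts on each $X_n$ by bijections, we have $gx_n\neq gy_n$ if and only if $x_n\neq y_n$. Hence $d(gx,gy)=d(x,y)$ and the action is isometric.

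The main step is (i)$\Rightarrow$(iii). Fix a compatible metric $d$. For each $n$, choose $\delta_n>0$ from equicontinuity with $\varepsilon=1/n$. By compactness and zero-dimensionality, choose a finite clopen partition $\mathcal{Q}$ of $X$ into sets of diameter less than $\delta_n$. The partition
\[
\mathcal{P}_n'=\bigvee_{g\in G} g\mathcal{Q}
\]
need not a priori be finite, so instead I use the equivalence relation
\[
x\sim_n y \iff \text{for all } g\in G,\ gx \text{ and } gy \text{ lie in the same atom of } \mathcal{Q}.
\]
This relation is $G$-invariant by construction. Its classes are closed as intersections of clopen sets. I then show the classes are open: if $d(x,y)<\eta$, where $\eta$ is such that $\eta$-close points are mapped $\eta'$-close uniformly in $g$, and $\eta'$ is a Lebesgue number of the clopen cover $\mathcal{Q}$, then $x\sim_n y$. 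This is exactly where equicontinuity enters. It follows that the classes form a finite clopen $G$-invariant partition $\mathcal{P}_n$ whose atoms have diameter at most $\operatorname{diam}\mathcal{Q}<\delta_n$. Replacing $\mathcal{P}_n$ by $\mathcal{P}_1\vee\cdots\vee\mathcal{P}_n$, which is still a finite $G$-invariant clopen partition, makes the partitions successively refining.

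Next I set $X_n=\mathcal{P}_n$ with the induced action of $G$ permuting atoms, and take $\varphi_n$ to be the inclusion of atoms, which is equivariant and surjective. The map $X\to\varprojlim X_n$, $x\mapsto(P_n(x))_n$, is continuous and equivariant. It is injective because the mesh of $\mathcal{P}_n$ tends to $0$. It is surjective by compactness, since a compatible sequence of atoms is a decreasing sequence of nonempty closed sets. Being a continuous bijection between compact Hausdorff spaces, it is a homeomorphism, which gives (iii). I expect the openness of the $\sim_n$-classes (the Lebesgue-number argument) to be the only delicate point; the rest is routine.
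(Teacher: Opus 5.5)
Your proof is correct. The paper gives no argument of its own here; it defers entirely to \cite[Theorem 2.3]{Pil23}. Your cycle (iii)$\Rightarrow$(ii)$\Rightarrow$(i)$\Rightarrow$(iii) is therefore a self-contained substitute for a citation rather than a variant of something in the text.

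The heart of your argument is the relation ``$gx$ and $gy$ lie in the same atom of $\mathcal{Q}$ for every $g\in G$''. It produces finite $G$-invariant clopen partitions directly from equicontinuity. Closedness of the classes is automatic. Openness comes from a uniform $\eta$, obtained by applying equicontinuity to a Lebesgue number of $\mathcal{Q}$ (for instance, anything below the minimal distance between its disjoint compact atoms).

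A standard alternative first passes to the invariant compatible metric $\sup_{g\in G} d(gx,gy)$. It then takes $\varepsilon$-chain components, which isometries permute and which shrink to points by total disconnectedness. Your route avoids constructing that metric and uses zero-dimensionality only to choose $\mathcal{Q}$. It also gives an invariant \emph{ultrametric} in (ii) for free.

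Two points you leave implicit are harmless:
\begin{enumerate}
\item Finiteness of the set of classes is compactness of $X$ applied to a disjoint open cover.
\item The mesh of $\mathcal{P}_n$ does tend to $0$. Any two points of an atom of $\mathcal{Q}$ are within $\delta_n$, hence, taking $g=e$, within $1/n$. It would be cleaner to choose $\mathcal{Q}_n$ of mesh $<1/n$ directly, since equicontinuity is only needed for openness.
\end{enumerate}
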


We also provide the characterisation of minimal equicontinuous actions, which will be called \emph{subodometers} (see Definition~\ref{D:odometer}). 

\begin{prop}\label{P:min}
    Let $G\curvearrowright X$ be an equicontinuous action of a countable discrete group on a metrisable Stone space. The following are equivalent:
    \begin{enumerate}
        \item the action is minimal;
        \item there exist finite sets $(X_n)_{n\in \Nb}$, transitive actions $G\curvearrowright X_n$ and $G$-equivariant connecting maps $\varphi_n : X_{n+1} \to X_n$ such that $G\curvearrowright X$ is conjugate to $G\curvearrowright \varprojlim X_n$;
        \item there exists a chain $\sigma$ in $G$ such that $G\curvearrowright X$ is conjugate to $G \curvearrowright \widehat{G}_\sigma$.
    \end{enumerate}
\end{prop}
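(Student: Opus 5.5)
The plan is to prove the cycle (iii)$\Rightarrow$(i)$\Rightarrow$(ii)$\Rightarrow$(iii), with Proposition~\ref{P:equi} supplying the inverse-limit presentation in the middle step.

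\emph{(iii)$\Rightarrow$(i).} Let $\sigma=(G_n)$ be a chain and $x=(g_nG_n)_n\in\widehat{G}_\sigma$. The basic clopen sets around $x$ are the cylinders $U_N=\{y: y_N=g_NG_N\}$. Since $G$ acts transitively on each finite set $G/G_N$, some $h\in G$ satisfies $hG_N=g_NG_N$. Then $h\cdot(eG_n)_n\in U_N$, so the orbit of the point $(eG_n)_n$ is dense. Every point is of this form up to translation in the same sense, so every orbit is dense. Concretely, given $y$ and the cylinder $U_N$ about $x$, choose $h$ with $hy_N=x_N$; then $hy\in U_N$. Hence the action is minimal.

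\emph{(i)$\Rightarrow$(ii).} By Proposition~\ref{P:equi}, we may assume $X=\varprojlim X_n$ with $G$-equivariant surjective connecting maps $\varphi_n$. We may take them surjective by replacing $X_n$ with the image of the projection $\pi_n\colon X\to X_n$, which is $G$-invariant. Then $\pi_n$ is surjective and equivariant. If the action on $X$ is minimal, each $G$-orbit $O\subseteq X_n$ gives a nonempty clopen invariant set $\pi_n^{-1}(O)$. By minimality this set is all of $X$, so $O=X_n$ and $G\curvearrowright X_n$ is transitive.

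\emph{(ii)$\Rightarrow$(iii).} This is the step requiring care. Fix $x=(x_n)\in\varprojlim X_n$ and set $G_n:=G_{x_n}$, the stabiliser. These are finite-index subgroups, since $X_n$ is finite and the action is transitive. Equivariance of $\varphi_n$ gives $G_{n+1}\le G_n$, because $gx_{n+1}=x_{n+1}$ implies $gx_n=x_n$. Transitivity yields equivariant bijections $G/G_n\to X_n$, $gG_n\mapsto gx_n$. These intertwine the connecting maps $gG_{n+1}\mapsto gG_n$ with $\varphi_n$, since $\varphi_n(gx_{n+1})=gx_n$. They therefore assemble into an equivariant homeomorphism $\widehat{G}_\sigma\cong\varprojlim X_n$.

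The remaining issue is that the chain need not be strictly decreasing. If $G_n=G_{n+1}$ then $\varphi_n$ is a bijection, so repeated terms can be deleted. If the chain eventually stabilises, $X$ is finite; this case is either excluded by the conventions on chains, or else handled as a finite odometer. This bookkeeping, together with making sure the surjectivity reduction in the previous step is legitimate, is the only real obstacle; everything else is formal.
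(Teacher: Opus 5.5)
Your proof is correct. The paper does not prove this proposition itself; it defers to the literature (\cite[Theorem 2.3]{Pil23}, together with \cite{CorPet08,CorMed16} for the minimal case). So the comparison is between your self-contained proof and a citation. Your argument is the standard one behind those references: minimality forces the finite factors to be transitive via the clopen invariant sets $\pi_n^{-1}(O)$, and the chain is then the sequence of stabilisers $G_{x_n}$ of the coordinates of a single thread $x\in\varprojlim X_n$. Its advantage is that the chain is completely explicit; the citation buys only brevity.

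Three small remarks.

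In (iii)$\Rightarrow$(i), density of the orbit of $(eG_n)_n$ on its own only shows that one orbit is dense, and not every point lies in that orbit. It is your concrete argument with an arbitrary $y$ that gives minimality, and that argument is fine.

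The strictness bookkeeping at the end is unnecessary. The paper defines a chain merely as a descending sequence $G_1\ge G_2\ge\cdots$ of finite-index subgroups and explicitly allows $X$ to be finite. The footnote to Proposition~\ref{P:properties} says finite odometers correspond exactly to eventually constant chains, so the stabiliser chain can be used as it stands.

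Your surjectivity reduction in (i)$\Rightarrow$(ii) is legitimate. Every thread of $\varprojlim X_n$ already lies in $\varprojlim \pi_n(X)$, so the two inverse limits coincide as $G$-spaces.
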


We emphasise that the finite-index subgroups appearing in Proposition~\ref{P:min} may not be normal (when $G$ is not abelian), so that the coset spaces $G/G_n$ may not inherit any group structure. Also, we allow for the possibility that $X$ is finite.  

We closely follow the terminology of \cite{CorPet08,CorMed16,CorHau26}, which also serve as useful supplemental references. 

\begin{df}\label{D:odometer}
    Let $G\curvearrowright X$ be an action of a countable discrete group on a metrisable Stone space. We say that the action is:
    \begin{enumerate}
        \item a \emph{subodometer}, if it is equicontinuous and minimal;
        \item an \emph{odometer}, if it is conjugate to an action $G\curvearrowright \widehat{G}_\sigma$ for some normal chain $\sigma$ in $G$. 
    \end{enumerate}
\end{df}

The relationship between the different actions on Stone spaces considered in this article is summarised in Figure~\ref{fig:hierarchy}. 

\begin{rem}
    By replacing sequences with nets, one may define analogous notions of (sub)odometers for actions on not necessarily metrisable Stone spaces (here, equicontinuity is defined in terms of the unique compatible uniformity on $X$). A group $G$ admits a nonmetrisable subodometer if and only if the collection of finite-index subgroups of $G$ is uncountable. For instance, the universal odometer action of $F_\infty$ will yield a nonmetrisable odometer (see \cite{CorHau26}).   
\end{rem}

% Toggle palette here:
% Use colour-blind friendly version:
\newif\ifHierarchyGreyscale
\HierarchyGreyscalefalse

% Use greyscale version instead:
% \newif\ifHierarchyGreyscale
% \HierarchyGreyscaletrue

\begin{figure}[t]
\centering
\resizebox{0.95\linewidth}{!}{%
\begin{tikzpicture}[
    font=\small,
    title/.style={font=\bfseries, align=center, text=black},
    assumption/.style={
        font=\scriptsize,
        align=left,
        text=black,
        fill=white,
        inner sep=2pt
    },
    ladder/.style={
        -{Latex[length=2mm,width=1.5mm]},
        line width=0.65pt,
        draw=black!75,
        shorten <=2.3pt,
        shorten >=2.3pt
    },
    guide/.style={
        densely dotted,
        draw=black!35,
        line width=0.5pt
    }
]

% ============================================================
% Palette definitions
% Toggle using \HierarchyGreyscaletrue or \HierarchyGreyscalefalse above.
% ============================================================

\ifHierarchyGreyscale

    % Greyscale with light fills
    \tikzset{
        hierarchy outer/.style={
            fill=black!3,
            draw=black!75,
            line width=0.9pt
        },
        hierarchy sub/.style={
            fill=black!6,
            draw=black!80,
            line width=0.9pt
        },
        hierarchy odo/.style={
            fill=black!10,
            draw=black!85,
            line width=0.9pt
        },
        hierarchy free/.style={
            fill=black!14,
            draw=black!90,
            line width=0.9pt
        },
        hierarchy dot outer/.style={fill=black!75},
        hierarchy dot sub/.style={fill=black!80},
        hierarchy dot odo/.style={fill=black!85},
        hierarchy dot free/.style={fill=black!90}
    }

\else

    % Colour-blind friendly Okabe--Ito palette
    \definecolor{HierarchyBlue}{RGB}{0,114,178}
    \definecolor{HierarchyGreen}{RGB}{0,158,115}
    \definecolor{HierarchyOrange}{RGB}{230,159,0}
    \definecolor{HierarchyVermillion}{RGB}{213,94,0}

    \tikzset{
        hierarchy outer/.style={
            fill=HierarchyBlue!8,
            draw=HierarchyBlue!80!black,
            line width=0.9pt
        },
        hierarchy sub/.style={
            fill=HierarchyGreen!9,
            draw=HierarchyGreen!80!black,
            line width=0.9pt
        },
        hierarchy odo/.style={
            fill=HierarchyOrange!13,
            draw=HierarchyOrange!85!black,
            line width=0.9pt
        },
        hierarchy free/.style={
            fill=HierarchyVermillion!10,
            draw=HierarchyVermillion!80!black,
            line width=0.9pt
        },
        hierarchy dot outer/.style={fill=HierarchyBlue!80!black},
        hierarchy dot sub/.style={fill=HierarchyGreen!80!black},
        hierarchy dot odo/.style={fill=HierarchyOrange!85!black},
        hierarchy dot free/.style={fill=HierarchyVermillion!80!black}
    }

\fi

% Nested ellipses, bunched slightly toward the bottom
\draw[hierarchy outer]
    (0,0) ellipse [x radius=5.8, y radius=3.0];

\draw[hierarchy sub]
    (0,-0.55) ellipse [x radius=4.55, y radius=2.15];

\draw[hierarchy odo]
    (0,-1.05) ellipse [x radius=3.25, y radius=1.35];

\draw[hierarchy free]
    (0,-1.55) ellipse [x radius=2.05, y radius=0.65];

% Labels inside ellipses
\node[title] at (0,2.45)
    {Equicontinuous actions};

\node[title] at (0,1.15)
    {Subodometers};

\node[title] at (0,-0.05)
    {Odometers};

\node[title] at (0,-1.55)
    {Free odometers};

% Points on right-hand boundaries of the ellipses,
% horizontally aligned with the corresponding labels
\coordinate (E0) at (3.35,2.45);
\coordinate (E1) at (2.78,1.15);
\coordinate (E2) at (2.18,-0.05);
\coordinate (E3) at (2.05,-1.55);

% Restriction ladder on the right
\coordinate (L0) at (6.20,2.45);
\coordinate (L1) at (6.20,1.15);
\coordinate (L2) at (6.20,-0.05);
\coordinate (L3) at (6.20,-1.55);

% Guide lines from ellipses to ladder
\draw[guide] (E0) -- (L0);
\draw[guide] (E1) -- (L1);
\draw[guide] (E2) -- (L2);
\draw[guide] (E3) -- (L3);

% Arrows and assumptions
\draw[ladder] (L0) -- (L1)
    node[midway, xshift=8pt, anchor=west, assumption]
    {minimality};

\draw[ladder] (L1) -- (L2)
    node[midway, xshift=8pt, anchor=west, assumption]
    {normal chains};

\draw[ladder] (L2) -- (L3)
    node[midway, xshift=8pt, anchor=west, assumption]
    {separating\\normal chains};

% Dots on the ladder
\fill[hierarchy dot outer] (L0) circle (2.3pt);
\fill[hierarchy dot sub] (L1) circle (2.3pt);
\fill[hierarchy dot odo] (L2) circle (2.3pt);
\fill[hierarchy dot free] (L3) circle (2.3pt);

\end{tikzpicture}%
}
\caption{Hierarchy of equicontinuous actions on Stone spaces.}
\label{fig:hierarchy}
\end{figure}

Finally, we summarise some well-known facts about odometers. 

\begin{prop}\label{P:properties}
Let $G$ be a countable discrete group and let $\sigma$ be a normal chain in $G$. Then the odometer action $G\curvearrowright \widehat{G}_\sigma$ is:
\begin{enumerate}
    \item uniquely ergodic;
    \item free if and only if $\sigma$ is separating\footnote{For subodometers, while freeness always implies that the chain is separating, it is known that the converse does not necessarily hold.}; and 
    \item if $G$ is infinite and $\sigma$ is separating, then $\widehat{G}_\sigma$ is
      a Cantor space.\footnote{More generally, an odometer is either finite or a Cantor space. It is finite precisely when the chain is eventually constant, and Cantor precisely when $[G:G_n] \to \infty$.}
\end{enumerate}
\end{prop}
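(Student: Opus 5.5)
The proof will use the profinite group structure of $\widehat{G}_\sigma$ together with the canonical homomorphism $\iota\colon G\to\widehat{G}_\sigma$, $g\mapsto (gG_n)_{n}$. The odometer action is then left translation by $\iota(G)$. Each $G_n$ is normal, so every $G/G_n$ is a finite group. The projections $\pi_n\colon \widehat{G}_\sigma\to G/G_n$ are continuous surjective homomorphisms, since the connecting maps are surjective. The cylinder sets $\pi_n^{-1}(gG_n)$ form a clopen basis.

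For (i), note first that $\iota(G)$ is dense in $\widehat{G}_\sigma$. Indeed, $\pi_n(\iota(G)) = G/G_n$, so $\iota(G)$ meets every basic cylinder. Let $\mu$ be a $G$-invariant Borel probability measure. For fixed $n$, the group $G$ acts transitively on $G/G_n$, and $g\,\pi_n^{-1}(hG_n)=\pi_n^{-1}(ghG_n)$. So $\mu$ gives the same mass to all $[G:G_n]$ cylinders of level $n$, and each has mass $1/[G:G_n]$. The cylinders form a $\pi$-system generating the Borel $\sigma$-algebra, so $\mu$ is uniquely determined. Existence is clear: normalised Haar measure on the compact group $\widehat{G}_\sigma$ is invariant. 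The same uniqueness argument also works through Kolmogorov consistency, without invoking Haar measure. Hence $M_G(\widehat{G}_\sigma)$ is a singleton.

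For (ii), suppose $g\in G$ fixes the point $x=(x_nG_n)_n$, where $x_n \in G$. Then $gx_nG_n=x_nG_n$ for all $n$, which gives $x_n^{-1}gx_n\in G_n$. By normality, $g\in G_n$ for all $n$. Conversely, every $g\in G_n$ fixes every point at level $n$. So every stabiliser equals $\bigcap_n G_n$. The action is therefore free if and only if this intersection is trivial, i.e.\ if and only if $\sigma$ is separating.

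For (iii), $\widehat{G}_\sigma$ is a metrisable Stone space, being a countable projective limit of finite discrete spaces. A nonempty compact metrisable zero-dimensional space without isolated points is homeomorphic to the Cantor set by Brouwer's theorem, so it suffices to rule out isolated points. I expect this to be the only real point. Since $G$ is infinite and $\bigcap_n G_n=\{e\}$, the indices $[G:G_n]$ are unbounded. Otherwise the chain would be eventually constant, equal to some finite-index subgroup $G_N$. Then $G_N=\bigcap_n G_n=\{e\}$ would have finite index in the infinite group $G$, a contradiction.

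Now take a basic cylinder $\pi_n^{-1}(hG_n)$ and pick $m>n$ with $G_m\neq G_n$. Then $hG_n$ splits into $[G_n:G_m]\ge 2$ cosets of $G_m$, and each of their preimages under $\pi_m$ is nonempty because $\pi_m$ is surjective. So every basic open set contains at least two points, and there are no isolated points. This also gives the footnoted dichotomy: the space is finite exactly when the chain stabilises, and Cantor exactly when $[G:G_n]\to\infty$.
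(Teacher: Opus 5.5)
The paper gives no proof of this proposition; it records (i)--(iii) as well-known facts about odometers, so there is no argument of the paper's to compare yours against. Your proof is correct and self-contained.

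\textbf{Part (i).} The equal-mass argument on level-$n$ cylinders together with the $\pi$--$\lambda$ theorem gives uniqueness. The cylinders, together with $\emptyset$, are closed under intersection because the chain is nested. Haar measure gives existence.
\begin{itemize}
\item Your sentence about Kolmogorov consistency describes an alternative \emph{existence} argument, not a uniqueness one. If you use it, note that finite additivity on the algebra of clopen sets is automatically countable additivity, by compactness.
\item The density of $\iota(G)$ is never used in (i).
\item The uniqueness half never uses normality either. With existence obtained via consistency rather than Haar measure, the same argument shows that every subodometer is uniquely ergodic.
\end{itemize}

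\textbf{Part (ii).} Your stabiliser computation is exactly right. It also explains the paper's footnote. Without normality, the same computation gives $G_x=\bigcap_n x_nG_nx_n^{-1}$. This is $\bigcap_n G_n$ at the point $x=(eG_n)_n$, so freeness forces the chain to be separating. At other points, however, it need not be trivial.

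\textbf{Part (iii).} The reduction to Brouwer's theorem is correct. So is the splitting of a cylinder $\pi_n^{-1}(hG_n)$ into $[G_n:G_m]\ge 2$ nonempty subcylinders. Your argument that the indices are unbounded is also complete: a bounded nondecreasing sequence of indices forces the $G_n$ to stabilise, since nested subgroups of equal finite index coincide.
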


\begin{rem}
    If a countable discrete group admits a free odometer action, then it is necessarily residually finite. 
\end{rem}

\subsection{A decomposition theorem}

We refer the reader to \cite{BroOza08} for details on reduced crossed product $C^*$-algebras.

\begin{prop}\label{P:decomp}
    Let $G\curvearrowright Y$ be an action of a discrete group on a finite set $Y$. Then 
    \[
        C(Y)\rtimes_\lambda G \cong \bigoplus_{[y]\in Y/G} M_{|Gy|}(C^*_\lambda(G_y)).
    \]
    In particular, if the action is transitive then $Y \cong G/H$ for some finite-index $H\le G$, and  
    \[
        C(G/H)\rtimes_\lambda G \cong M_{[G:H]}(C^*_\lambda(H)).
    \]
\end{prop}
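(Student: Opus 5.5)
First I reduce to the transitive case. Since $C(Y)=\bigoplus_{[y]\in Y/G}C(Gy)$ and each orbit $Gy$ is a clopen $G$-invariant subset, the crossed product decomposes accordingly. The projections $1_{Gy}\in C(Y)$ are central in $C(Y)\rtimes_\lambda G$, because they commute with $C(Y)$ and are fixed by the action. This gives
\[
    C(Y)\rtimes_\lambda G\cong\bigoplus_{[y]\in Y/G}C(Gy)\rtimes_\lambda G.
\]
Reduced crossed products commute with finite direct sums of invariant ideals; one can check this directly in the regular representation on $\ell^2(G)\otimes C(Y)$-modules. For a single orbit, the map $g\mapsto gy$ identifies $Gy\cong G/G_y$ equivariantly, with $[G:G_y]=|Gy|$. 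So it suffices to prove the transitive statement $C(G/H)\rtimes_\lambda G\cong M_{[G:H]}(C^*_\lambda(H))$ for a finite-index subgroup $H\le G$.

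For the transitive case, let $n=[G:H]$ and choose coset representatives $t_1=e,t_2,\dots,t_n$. Let $p_i=1_{\{t_iH\}}\in C(G/H)$. These are pairwise orthogonal projections summing to $1$. I then define $v_{ij}=p_i\,u_{t_it_j^{-1}}\,p_j$, and since $u_{t_it_j^{-1}}p_ju_{t_it_j^{-1}}^*=p_i$, this simplifies to $v_{ij}=u_{t_it_j^{-1}}p_j$. A routine check shows that $\{v_{ij}\}$ is a system of matrix units in $C(G/H)\rtimes_\lambda G$ with $\sum_i v_{ii}=1$. It is then standard that $C(G/H)\rtimes_\lambda G\cong M_n(p_1(C(G/H)\rtimes_\lambda G)p_1)$.

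The corner can be computed from the dense subalgebra of finite sums $\sum_g f_g u_g$: we have $p_1 f u_g p_1=f(eH)\,1_{\{eH\}}1_{\{gH\}}u_g$, which vanishes unless $g\in H$. So the algebraic corner is spanned by $\{p_1u_h : h\in H\}$. The map $h\mapsto p_1u_h$ is a unitary representation of $H$ into the corner, since $u_h$ commutes with $p_1$ for $h\in H$.

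The main point is to check that the closure of this corner is exactly $C^*_\lambda(H)$ and not some other completion. I would do this with the canonical faithful conditional expectation $E: C(G/H)\rtimes_\lambda G\to C(G/H)$. Restricted to the corner, it gives $p_1u_h\mapsto\delta_{h,e}p_1$, so it induces the canonical trace $\tau_H$ on the image of $\mathbb{C}[H]$. The trace $\tau_H$ is faithful on the corner, because $E$ is faithful. The GNS representation of $\tau_H$ is the left regular representation of $H$. Faithfulness of the expectation on a reduced crossed product then yields that the corner's norm coincides with the reduced norm, giving $p_1(C(G/H)\rtimes_\lambda G)p_1\cong C^*_\lambda(H)$.

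Alternatively, I could realise the whole algebra concretely. The regular representation on $\ell^2(G/H)\otimes\ell^2(G)$ decomposes through $\ell^2(G)\cong\bigoplus_i\ell^2(t_iH)$, and $\lambda_G|_H$ is a multiple of $\lambda_H$, so the norms agree. I expect this norm identification to be the only nonformal step.
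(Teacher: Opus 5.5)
Your proposal is correct, and its skeleton matches the paper's proof. You split $Y$ into orbits using the central projections $1_{Gy}$, identify each orbit with $G/G_y$ via $gG_y\mapsto gy$, and reduce to the transitive case. The difference is that the paper does not prove the transitive isomorphism $C(G/H)\rtimes_\lambda G\cong M_{[G:H]}(C^*_\lambda(H))$; it cites \cite[Proposition 2.3]{Sca20}. You prove it directly, and your argument holds up:
\begin{itemize}
\item the elements $v_{ij}=u_{t_it_j^{-1}}p_j$ are matrix units summing to $1$;
\item the corner $p_1(C(G/H)\rtimes_\lambda G)p_1$ is the closed span of the unitaries $p_1u_h$, $h\in H$;
\item the state $\mathrm{ev}_{eH}\circ E$ is faithful on that corner, since for $a$ in the corner the bimodule property gives $E(a)=p_1E(a)p_1=E(a)(eH)\,p_1$;
\item its GNS representation is $\lambda_H$, which forces the corner to be $C^*_\lambda(H)$.
\end{itemize}
Your route costs a few more lines but is self-contained. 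The explicit matrix units also make the unit bookkeeping in Lemma~\ref{L:fin_stage} immediate: $1=\sum_i v_{ii}$ with each $v_{ii}\sim p_1\mapsto 1_{C^*_\lambda(H)}$, so $[1]_0$ corresponds to $[G:H]$.

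There is one small slip in your alternative concrete sketch. For $h\in H$, $\lambda_G(h)$ acts by left multiplication, so it preserves the right cosets $Hs$, not the left cosets $t_iH$. The decomposition showing $\lambda_G|_H$ is a multiple of $\lambda_H$ should therefore be $\ell^2(G)=\bigoplus_{Hs\in H\backslash G}\ell^2(Hs)$; the two decompositions agree only when $H$ is normal. This does not affect your main argument via the conditional expectation.
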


\begin{proof}
In the transitive case, fixing any basepoint $y_0 \in Y$, let $H = G_{y_0}$ be the stabiliser subgroup. Then there is a $G$-equivariant bijection $G/H \to Y$ given by $gH \mapsto gy_0$. The isomorphism $C(G/H)\rtimes_\lambda G \cong M_{[G:H]}(C^*_\lambda(H))$ is proved in \cite[Proposition 2.3]{Sca20}. For a general finite action, write $Y$ as the disjoint union of its orbits, $Y=\bigsqcup_{[y]\in Y/G} Gy$. Applying the transitive case to each orbit gives
\[
C(Y)\rtimes_\lambda G \cong \bigoplus_{[y]\in Y/G} C(Gy)\rtimes_\lambda G\cong  \bigoplus_{[y]\in Y/G}
M_{|Gy|}\big(C_\lambda^*(G_y)\big).
\]
This completes the proof.
\end{proof}

\section{Nonnuclear Bunce--Deddens algebras}\label{S:construction}

In this section, we give the definition of (nonnuclear) Bunce--Deddens algebras and record some of their basic properties. 

\begin{df}\label{D:BD}
    Let $G$ be a countably infinite discrete residually finite group and $\sigma = (G_n)_{n\in\Nb}$ a separating normal chain in $G$. Write $\widehat{G}_\sigma := \varprojlim G/G_n$ for the profinite completion of $G$ with respect to $\sigma$. Then the \emph{Bunce--Deddens algebra over $G$ with respect to $\sigma$} is defined as the reduced crossed product 
    \[
        \BD (G,\sigma) := C(\widehat{G}_\sigma)\rtimes_\lambda G. 
    \]
\end{df}

By Proposition~\ref{P:min} and Proposition~\ref{P:properties} (ii), Bunce--Deddens algebras are, equivalently, reduced crossed product $C^*$-algebras associated with free odometer actions on a Cantor space.

\begin{thm}\label{T:BD_properties}
Let $G$ be a countably infinite discrete residually finite group, and let $\sigma$ be a separating normal chain in $G$. Then $\BD (G,\sigma)$ is simple, separable, unital, and admits a unique tracial state. Moreover, $\BD (G,\sigma)$ is nuclear if and only if $G$ is amenable. 
\end{thm}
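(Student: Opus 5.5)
Unitality and separability are immediate: $\widehat{G}_\sigma$ is a compact metrisable space, so $C(\widehat{G}_\sigma)$ is a separable unital $C^*$-algebra, and $G$ is countable, so the reduced crossed product is separable and unital. For simplicity, I will use the standard criterion (Archbold--Spielberg, Kawamura--Tomiyama; see \cite{BroOza08}): if $G\curvearrowright X$ is minimal and topologically free, then $C(X)\rtimes_\lambda G$ is simple. The odometer $G\curvearrowright\widehat{G}_\sigma$ is minimal, being conjugate to the left translation action of the dense subgroup $G$ on the profinite group $\widehat{G}_\sigma$ (or by Proposition~\ref{P:min}). It is free by Proposition~\ref{P:properties}(ii), because $\sigma$ is separating. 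Free implies topologically free, so $\BD(G,\sigma)$ is simple.

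For the unique tracial state, I will invoke the result for free actions: when $G\curvearrowright X$ is free, every tracial state $\tau$ on $C(X)\rtimes_\lambda G$ factors through the canonical conditional expectation $E$. To see this, fix $g\ne e$. By freeness and compactness, cover $X$ by finitely many clopen sets $U_i$ with $gU_i\cap U_i=\emptyset$, and refine them to a partition. Then
\[
\tau(fu_g)=\sum_i \tau(1_{U_i}fu_g 1_{U_i}) = \sum_i\tau(1_{U_i}1_{gU_i}fu_g)=0,
\]
using the trace property together with $u_g1_{U_i}=1_{gU_i}u_g$. So $\tau=\mu\circ E$, where $\mu$ is a $G$-invariant probability measure. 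Conversely, each $\mu\in M_G(X)$ gives the trace $\mu\circ E$. Tracial states therefore correspond bijectively to $M_G(\widehat{G}_\sigma)$, which is a singleton by Proposition~\ref{P:properties}(i) (the Haar measure).

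For nuclearity, I will use the standard fact that $C(X)\rtimes_\lambda G$ is nuclear if and only if the action is amenable. If $G$ is amenable, every action is amenable, so the crossed product is nuclear. Conversely, I expect the cleanest route is to note that $\widehat{G}_\sigma$ carries the $G$-invariant Haar probability measure $\mu$. An amenable action with an invariant probability measure forces $G$ to be amenable: average the approximately invariant maps $X\to\mathrm{Prob}(G)$ against $\mu$ to get approximately invariant means on $G$. Alternatively, I could use the conditional expectation $\mu\otimes\mathrm{id}$ of the crossed product onto $C^*_\lambda(G)$, which is $\mu\circ E$ extended along $u_g$ via $\tau$-type integration. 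This expectation is completely positive, which gives nuclearity of $C^*_\lambda(G)$ and hence amenability of $G$.

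The main obstacle is making this last converse rigorous. The main point to check there is that integrating the approximately invariant functions against the invariant measure genuinely produces almost-invariant probability vectors on $G$, which is a short estimate. Everything else reduces to citing standard crossed product theory.
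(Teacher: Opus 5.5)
Your proposal is correct and follows the same route as the paper. Simplicity comes from minimality and freeness via Archbold--Spielberg, uniqueness of the trace from the tracial-state/invariant-measure correspondence for free actions plus unique ergodicity, and nuclearity is reduced to amenability of the action together with the existence of the invariant (Haar) measure. The only difference is that you prove inline the two facts the paper cites: the trace correspondence of Kawamura--Takemoto--Tomiyama, via your clopen-partition computation, and the fact that an amenable action with an invariant probability measure forces $G$ to be amenable, via averaging against $\mu$. Both of these sketches are sound.
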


\begin{proof}
    Unitality and separability are immediate since $\widehat{G}_\sigma$ is compact and metrisable, and $G$ is countable and discrete. Propositions~\ref{P:min} and \ref{P:properties} (ii) imply that the action is minimal and free, respectively, hence $\BD (G,\sigma)$ is simple by \cite{ArcSpi94}. It has a unique tracial state by Proposition~\ref{P:properties} (i), using that for free actions tracial states on $C(\widehat{G}_\sigma)\rtimes_\lambda G$ are in bijective correspondence with invariant measures in $M_G(\widehat{G}_\sigma)$ \cite[Corollary 2.8]{KawTakTom90}. Finally, $\BD (G,\sigma)$ is nuclear if and only if the action $G\curvearrowright \widehat{G}_\sigma$ is amenable, which holds if and only if $G$ is amenable since $M_G(\widehat{G}_\sigma)\ne \emptyset$ (cf.~\cite[Lemma 2.2]{GarGefKraNar23}).   
\end{proof}

\begin{prop}\label{P:limit}
    Let $G\curvearrowright X$ be an equicontinuous action of a countable discrete group on a metrisable Stone space. Then identifying $G\curvearrowright X$ with $G\curvearrowright \varprojlim X_n$ according to Proposition~\ref{P:equi}, we have  
    \[
        C(X)\rtimes_\lambda G \cong \varinjlim \bigoplus_{[y]\in X_n/G} M_{|Gy|}(C^*_\lambda(G_y)).
    \]
    In particular, if $\sigma = (G_n)_{n\in\Nb}$ is a separating normal chain in $G$, then 
    \[
        \BD (G,\sigma) \cong \varinjlim M_{[G:G_n]}(C^*_\lambda(G_n)).
    \]    
\end{prop}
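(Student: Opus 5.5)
The plan is to write $C(X)$ as an increasing union of finite-dimensional subalgebras and pass this through the reduced crossed product. By Proposition~\ref{P:equi}(iii), identify $X$ with $\varprojlim X_n$, where the $G$-actions on the $X_n$ are compatible with the $G$-equivariant surjections $\varphi_n$. Let $\pi_n : X\to X_n$ be the coordinate projections. Pulling back along $\pi_n$ gives injective unital $G$-equivariant $*$-homomorphisms $\pi_n^* : C(X_n)\to C(X)$, and these satisfy $\pi_{n+1}^*\circ\varphi_n^* = \pi_n^*$. The union $\bigcup_n \pi_n^*(C(X_n))$ is exactly the algebra of locally constant functions on $X$. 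The sets $\pi_n^{-1}(x)$ form a basis of clopen sets, so this union separates points. By Stone--Weierstrass it is therefore dense in $C(X)$. Hence $C(X)\cong\varinjlim (C(X_n),\varphi_n^*)$ as $G$-$C^*$-algebras.

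Next we use that equivariant inclusions of unital $G$-algebras induce injective maps of reduced crossed products. If $A\subseteq B$ is a $G$-invariant $C^*$-subalgebra, the map $A\rtimes_\lambda G\to B\rtimes_\lambda G$ is isometric. One way to see this is that the canonical faithful conditional expectations onto $A$ and $B$ are compatible. Alternatively, restrict a regular representation of $B$ induced from a faithful representation of $B$ to $A$; this yields a regular representation of $A$ induced from a faithful representation of $A$. So each $C(X_n)\rtimes_\lambda G$ embeds in $C(X)\rtimes_\lambda G$, compatibly with the connecting maps $\varphi_n^*\rtimes G$. The union of the images contains $C_c(G,\bigcup_n\pi_n^*C(X_n))$. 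This set is dense in $C_c(G,C(X))$ in the reduced norm, because $\|\sum_g f_g u_g\|\le\sum_g\|f_g\|$. Hence the union is dense, and we obtain
\[
C(X)\rtimes_\lambda G\cong\varinjlim C(X_n)\rtimes_\lambda G .
\]
Applying Proposition~\ref{P:decomp} to each finite action $G\curvearrowright X_n$ then gives the first isomorphism. Here the connecting maps are whatever the isomorphisms of Proposition~\ref{P:decomp} transport $\varphi_n^*\rtimes G$ to; the statement does not require describing them explicitly.

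For the special case, take $X=\widehat{G}_\sigma$ and $X_n=G/G_n$ with the quotient connecting maps. Each action $G\curvearrowright G/G_n$ is transitive. The stabiliser of the coset $gG_n$ is $gG_ng^{-1}=G_n$ by normality. The transitive case of Proposition~\ref{P:decomp} then yields $C(G/G_n)\rtimes_\lambda G\cong M_{[G:G_n]}(C^*_\lambda(G_n))$. Separation of the chain is not needed for this isomorphism; it only ensures the limit is $\BD(G,\sigma)$ as in Definition~\ref{D:BD}.

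The main point requiring care is the injectivity of $C(X_n)\rtimes_\lambda G\to C(X)\rtimes_\lambda G$. This is a genuine issue only for reduced crossed products. It is standard, and I would cite \cite{BroOza08} for it, using the conditional expectation argument. Everything else is routine density.
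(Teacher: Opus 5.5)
Your proposal is correct and follows the same route as the paper: identify $X$ with $\varprojlim X_n$, use $C(\varprojlim X_n)\rtimes_\lambda G\cong\varinjlim C(X_n)\rtimes_\lambda G$, apply Proposition~\ref{P:decomp} at each stage, and in the odometer case observe that $G\curvearrowright G/G_n$ is transitive with stabiliser $G_n$. The paper states the continuity step without comment, whereas you justify it via density of locally constant functions and the isometric inclusion of reduced crossed products along equivariant unital inclusions; note that injectivity of $\pi_n^*$ uses surjectivity of the connecting maps, as assumed in the preliminaries.
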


\begin{proof}
    Up to conjugacy, $G\curvearrowright X$ is given by $G\curvearrowright \varprojlim X_n$. Hence
    \[
        C(X)\rtimes_\lambda G \cong C(\varprojlim X_n)\rtimes_\lambda G \cong \varinjlim C(X_n)\rtimes_\lambda G. 
    \]
    Applying Proposition~\ref{P:decomp} yields the desired isomorphism. In the odometer case, the action $G\curvearrowright X_n$ is conjugate to the transitive action $G\curvearrowright G/G_n$, whose stabiliser is $G_n$.
\end{proof}

\section{Regularity in nonnuclear Bunce--Deddens algebras}\label{S:regularity}

In this section, we record the regularity properties of reduced crossed products arising from equicontinuous actions on metrisable Stone spaces. Since we do not need to work directly with the definitions of stable rank one, pureness or selflessness (only some of their basic permanence properties), we refer the reader to \cite{Rie83,AntPerThiVil24,Rob25} for precise definitions and motivation. 

\begin{thm}\label{T:sr1pure}
    Let $G\curvearrowright X$ be an equicontinuous action of a countable discrete group on a metrisable Stone space. Suppose that for all finite-index subgroups $H\le G$, the reduced group $C^*$-algebra $C^*_\lambda(H)$ is pure (has stable rank one). Then the reduced crossed product $C(X)\rtimes_\lambda G$ is pure (has stable rank one). 
\end{thm}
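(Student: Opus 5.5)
We will use the inductive-limit decomposition of Proposition~\ref{P:limit}, together with permanence of both properties under finite direct sums, matrix amplifications and inductive limits.

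By Proposition~\ref{P:equi}, we identify $G\curvearrowright X$ with $G\curvearrowright\varprojlim X_n$ for finite $G$-sets $X_n$ and equivariant surjections $\varphi_n$. Proposition~\ref{P:limit} then gives
\[
    C(X)\rtimes_\lambda G\cong \varinjlim A_n,\qquad A_n:=\bigoplus_{[y]\in X_n/G} M_{|Gy|}(C^*_\lambda(G_y)).
\]
Since $X_n$ is finite, each orbit $Gy\subseteq X_n$ is finite. Hence each stabiliser $G_y$ has finite index $|Gy|$ in $G$. By hypothesis, every $C^*_\lambda(G_y)$ is therefore pure (respectively has stable rank one).

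Next we pass these properties to each $A_n$. Stable rank one passes to matrix algebras (Rieffel \cite{Rie83}: $M_k(A)$ has stable rank one when $A$ does) and to finite direct sums (the stable rank of a direct sum is the maximum). Pureness, i.e.\ almost unperforation and almost divisibility of the Cuntz semigroup, is stable under matrix amplification, because $\mathrm{Cu}(M_k(A))\cong \mathrm{Cu}(A)$ via stability of the Cuntz semigroup. It is also stable under finite direct sums, because $\mathrm{Cu}(A\oplus B)\cong\mathrm{Cu}(A)\oplus\mathrm{Cu}(B)$ and both conditions can be checked coordinatewise. So each $A_n$ is pure (respectively has stable rank one).

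Finally we pass to the limit. Stable rank one passes to inductive limits with arbitrary (not necessarily injective) connecting maps, since the set of invertibles in the limit is the closure of the union of the images of invertibles (\cite{Rie83}). For pureness we invoke the permanence result that inductive limits of pure $C^*$-algebras are pure, via continuity of $\mathrm{Cu}$ under inductive limits (see \cite{AntPerThiVil24}).

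The main obstacle is this last step for pureness. Almost divisibility and almost unperforation are not formally preserved by sequential limits of semigroups unless one checks that $\mathrm{Cu}$-morphisms and the limit construction preserve them. I would first look for this stated in \cite{AntPerThiVil24} or in Winter's paper \cite{Win12}. Failing that, I would verify it directly: given $x'\ll x$ in the limit, lift both to some finite stage, apply the property there, and push the result forward. This works because $\mathrm{Cu}$ of the limit is the limit in the category $\mathrm{Cu}$, in which every element is a supremum of images of compactly contained elements from finite stages.
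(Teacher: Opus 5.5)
Your proposal is correct and follows the paper's proof. The paper likewise decomposes the crossed product via Proposition~\ref{P:limit}, observes that the stabilisers are finite-index subgroups, and invokes permanence of pureness and of stable rank one under matrix amplifications, finite direct sums and inductive limits, citing \cite{PerThiVil25} and \cite{Rie83}. One small wording fix: the invertibles of the limit are not the closure of the images of finite-stage invertibles; rather, those images are dense in the limit, and that density is what gives stable rank one.
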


\begin{proof}
    By Proposition~\ref{P:limit}, $C(X)\rtimes_\lambda G$ is isomorphic to the inductive limit of a sequence of the form $\bigoplus_{n=1}^N M_{k_n}(C^*_\lambda(G_n))$ for some $N, k_n \in \Nb$ and finite-index subgroups $G_n\le G$. Pureness is preserved by taking matrix amplifications, direct sums and inductive limits; see Theorems 4.11 and 3.8 in \cite{PerThiVil25}.\footnote{One way to see that pureness is invariant under matrix amplification is to note that pureness is a property of the Cuntz semigroup; in particular, invariant under stabilisation.} Stable rank one has the same permanence properties by Theorems 3.3, 5.2, and 5.1 in \cite{Rie83}. Thus the result follows by our assumption. 
\end{proof}

We highlight some groups to which Theorem~\ref{T:sr1pure} may be applied. 

\begin{cor}\label{C:sr1pure}
    Let $G$ be a countable discrete group. Suppose that $G$ is (at least) one of the following:
    \begin{enumerate}
        \item acylindrically hyperbolic; 
        \item a nontrivial linear group with trivial amenable radical; or 
        \item a group admitting a topologically free extreme boundary action. 
    \end{enumerate}
    Then for every equicontinuous action of $G$ on a metrisable Stone space, the reduced crossed product $C(X)\rtimes_\lambda G$ is pure and has stable rank one. 
\end{cor}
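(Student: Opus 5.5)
The plan is to reduce the corollary to Theorem~\ref{T:sr1pure}. It suffices to show that each of the three classes (i)--(iii) is closed under passing to finite-index subgroups, and that every group in these classes has a reduced group $C^*$-algebra that is pure and has stable rank one. Theorem~\ref{T:sr1pure} then gives both conclusions at once, for every equicontinuous action on a metrisable Stone space.

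The first step is to cite the recent selflessness results. Robert \cite{Rob25} showed that a selfless $C^*$-algebra is pure and, when it is simple and nonelementary with a unique trace, also has stable rank one. Building on this, \cite{AmrGaoKunPat25} and \cite{Oza25} proved that $C^*_\lambda(H)$ is selfless for:
\begin{itemize}
\item acylindrically hyperbolic groups $H$;
\item linear groups with trivial amenable radical;
\item groups with a topologically free extreme boundary action.
\end{itemize}
I would state this as a black box: for $H$ in any of these classes, $C^*_\lambda(H)$ is pure and has stable rank one. It remains to check that finite-index subgroups stay in the same class.

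For (i), the standard fact is that a finite-index subgroup of an acylindrically hyperbolic group is again acylindrically hyperbolic. It acts acylindrically on the same hyperbolic space, and it is still nonelementary because its limit set is unchanged.

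For (ii), a finite-index subgroup $H$ of a linear group is linear. Its amenable radical $R(H)$ is characteristic in $H$. Passing to the normal core $N$ of $H$, which also has finite index, we get that $R(N)$ is normal in $G$ and amenable, hence trivial. Then $R(H)\cap N$ is an amenable normal subgroup of $N$, so it lies in $R(N)=\{e\}$. Thus $R(H)$ is finite and normal in $H$. I would finish with the standard fact that a finite-index subgroup of a group with trivial amenable radical again has trivial amenable radical; this can be seen via the Furstenberg boundary, which is the same for $H$ and $G$. Nontriviality is automatic when $G$ is nontrivial with trivial amenable radical, since then $G$ is infinite and so is $H$.

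For (iii), I would use the Breuillard--Kalantar--Kennedy--Ozawa result that extreme boundary actions restrict to finite-index subgroups. More precisely, if $G\curvearrowright X$ is an extreme boundary action and $H\le G$ has finite index, then $H\curvearrowright X$ is a boundary action, and topological freeness clearly passes to subgroups. Extremality (in the sense of Le Boudec or Ozawa) should also persist, since compressibility of open sets follows by a pigeonhole argument over cosets.

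I expect the main obstacle to be this last verification that extreme boundary actions remain extreme for finite-index subgroups. If that fails, I would instead check whether \cite{Oza25} already states selflessness of $C^*_\lambda(H)$ for all finite-index $H$, or find a direct argument that selflessness of $C^*_\lambda(G)$ transfers to $C^*_\lambda(H)$.
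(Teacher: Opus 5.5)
Your reduction to Theorem~\ref{T:sr1pure} matches the paper's, and so do your finite-index closure arguments for (i) and (ii). In (iii), the step you flag as the main obstacle is exactly \cite[Proposition 1.3]{FloKliCobPag26}, which the paper cites.

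The genuine gap is your black box for class (i): it is false that $C^*_\lambda(H)$ is selfless for every acylindrically hyperbolic group $H$. The amenable radical of an acylindrically hyperbolic group is its maximal finite normal subgroup $K$, and $K$ can be nontrivial (e.g.\ $\mathrm{SL}(2,\Zb)$ with $K=\{\pm 1\}$, or $F_2\times\Zb/2\Zb$). When $K\neq\{e\}$, $p=\frac{1}{|K|}\sum_{k\in K}\lambda_k$ is a nontrivial central projection in $C^*_\lambda(H)$. Hence $C^*_\lambda(H)$ is neither simple nor monotracial, whereas selfless $C^*$-algebras are simple with a unique (quasi)trace \cite{Rob25}. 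Ozawa's selflessness theorem needs trivial amenable radical. That is exactly why the paper imposes this hypothesis in Corollary~\ref{C:self}(i) but not in Corollary~\ref{C:sr1pure}(i). A smaller point: selflessness for linear groups with trivial amenable radical is due to \cite{Vig26}, not \cite{AmrGaoKunPat25,Oza25}.

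As written, your argument only proves the statement for acylindrically hyperbolic $G$ with trivial amenable radical; that hypothesis passes to finite-index subgroups, as in your (ii). For the remaining groups you need an input that handles the finite radical without going through selflessness. The central projections coming from $C^*(K)\subseteq C^*_\lambda(H)$ split $C^*_\lambda(H)$ into finitely many summands. Each summand is Morita equivalent to a reduced twisted group $C^*$-algebra of a finite-index subgroup of $H/K$, and $H/K$ is acylindrically hyperbolic with trivial amenable radical. You then need pureness and stable rank one for such twisted group $C^*$-algebras (cf.\ \cite{Rau25,RauThiVil25}). You also need that both properties pass to finite direct sums and are preserved under this Morita equivalence. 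The paper gets all of this in one step from \cite[Corollary D]{FloKliCobPag26}, which is formulated for exactly this kind of group extension.
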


\begin{proof}
    By Theorem~\ref{T:sr1pure}, it suffices to show that for any finite-index subgroup $H$ of a group $G$ belonging to one of (i), (ii), or (iii), $C^*_\lambda(H)$ is pure and has stable rank one. We observe that each of these classes of groups is closed under passing to a finite-index subgroup. Indeed, finite-index subgroups of acylindrically hyperbolic groups are acylindrically hyperbolic \cite[Lemma 3.9]{MinOsi15} (see also \cite[Lemma 1]{MinOsi19}); subgroups of linear groups are linear, and the property of having trivial amenable radical is preserved by passing to finite-index subgroups (see \cite[Lemma 2.2]{RauThiVil25}); and the restriction to a finite-index subgroup of a topologically free extreme boundary action is a topologically free extreme boundary action \cite[Proposition 1.3]{FloKliCobPag26}. Now, pureness and stable rank one for such groups follow, respectively, from \cite[Corollary D]{FloKliCobPag26} (see also \cite{AmrGaoKunPat25,Rau25,RauThiVil25}), \cite[Theorem 1.4]{Vig26} and \cite[Theorem 11]{Oza25}.\footnote{In the latter two cases, we use that selflessness implies pureness and stable rank one for tracial $C^*$-algebras, by \cite[Theorem 5.13]{AntPerThiVil24} and \cite[Theorem 3.1]{Rob25}.}
\end{proof}

We summarise some concrete examples of groups covered by Corollary~\ref{C:sr1pure}. 

\begin{enumerate}
    \item All free products $G * H$ of nontrivial groups with $|H| > 2$. 
    \item Surface groups, i.e., fundamental groups $\pi_1(\Sigma_g)$ of closed orientable surfaces $\Sigma_g$ of genus $g\ge 2$. 
    \item Mapping class groups $\operatorname{MCG}(\Sigma_{g,p})$ of a closed surface of genus $g$ with $p$ punctures, where $g \ne 0$ and $p > 3$. 
    \item Baumslag--Solitar groups $\operatorname{BS}(m,n)$ with $|m|\ne |n|$ and $|m|,|n| > 1$. 
    \item Zariski-dense subgroups of $\operatorname{PSL}(d,\Rb)$ with $d\ge 2$.
\end{enumerate}

The list of groups in Corollary~\ref{C:sr1pure} is far from exhaustive. For instance, there are groups $G$ for which $C^*_\lambda(G)$ has stable rank one, while it is not currently known whether $C^*_\lambda(G)$ is pure (see, e.g.\ the recent work \cite{KerPet26}). 

For subodometer actions, the conclusion of Theorem~\ref{T:sr1pure} can often be strengthened. 

\begin{thm}\label{T:self}
    Let $G\curvearrowright X$ be a subodometer action of a countable discrete group on a metrisable Stone space. Suppose that for all finite-index subgroups $H\le G$, the reduced group $C^*$-algebra $C^*_\lambda(H)$ is selfless. Then the reduced crossed product $C(X)\rtimes_\lambda G$ is selfless. 
\end{thm}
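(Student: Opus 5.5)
We follow the same route as in Theorem~\ref{T:sr1pure}, replacing pureness with selflessness. By Proposition~\ref{P:min}, a subodometer is conjugate to $G\curvearrowright\widehat{G}_\sigma=\varprojlim G/G_n$ for some chain $\sigma=(G_n)_{n\in\Nb}$ of finite-index (not necessarily normal) subgroups. Each action $G\curvearrowright G/G_n$ is transitive, so there is a single orbit. Proposition~\ref{P:decomp} and the argument of Proposition~\ref{P:limit} (which never used normality in the transitive case) then give
\[
    C(X)\rtimes_\lambda G\cong \varinjlim M_{[G:G_n]}\big(C^*_\lambda(G_n)\big).
\]
This is the point where minimality enters. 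Every building block is a single matrix amplification of one reduced group $C^*$-algebra, with no direct sums; direct sums are never selfless, since selfless algebras are simple or at least have a unique trace-like structure. The connecting maps are unital injective $*$-homomorphisms, because they are induced by the surjections $G/G_{n+1}\to G/G_n$.

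By hypothesis each $C^*_\lambda(G_n)$ is selfless. It then remains to establish two permanence properties of selflessness in Robert's sense \cite{Rob25}.

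\textbf{(a) Matrix amplifications.} If $A$ is selfless, then so is $M_k(A)$. We would use the characterisation of selflessness as the existence, for $A$ with its (unique) trace, of an embedding of $A *_{\mathrm{red}} C^*_\lambda(\mathbb{Z})$ into an ultrapower $A^\omega$ extending the diagonal embedding of $A$, in a state-preserving way. Given such a free unitary $u\in A^\omega$ for $A$, the element $1_k\otimes u\in M_k(A)^\omega$ is not free from $M_k(A)$, since it commutes with the matrix units. So the route is not a naive one. We would instead either cite the known permanence result (Robert shows selflessness passes to corners and matrix algebras; cf.\ \cite{Rob25,AmrGaoKunPat25}), or argue directly: by Dykema-type results, $M_k(A)*_{\mathrm{red}}(\text{Haar unitary})$ is generated inside $M_k(A*_{\mathrm{red}} L)$ by $M_k(A)$ and a suitable unitary constructed from a free Haar unitary placed in the $(1,1)$-corner together with the matrix units. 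An alternative is to avoid this issue entirely by noting that $M_{[G:G_n]}(C^*_\lambda(G_n))\cong C(G/G_n)\rtimes_\lambda G$ and proving selflessness of the latter directly.

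\textbf{(b) Inductive limits.} Selflessness passes to inductive limits with unital injective connecting maps. A unique trace on the limit, or a compatible sequence of states, is needed. Since selfless $C^*$-algebras are simple with a unique trace (or more generally Robert's condition is phrased state-wise), the connecting maps preserve the traces. Freeness in the ultrapower can then be tested on a dense union by a diagonal/reindexing argument, as selflessness is a local, $\omega$-ultrapower condition checked on finite subsets of a dense set.

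We expect step (a) to be the main obstacle, since it is the only genuinely non-formal step. The first thing we would try is locating the exact permanence statement for matrix amplifications in \cite{Rob25}, namely that selflessness is preserved under passing to full hereditary subalgebras and Morita equivalence in the unital tracial setting. Step (b) should then be routine, completing the proof.
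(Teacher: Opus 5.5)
Your proposal is correct and is essentially the paper's proof: minimality makes every finite stage transitive, so Proposition~\ref{P:limit} gives $C(X)\rtimes_\lambda G\cong\varinjlim M_{[G:G_n]}(C^*_\lambda(G_n))$, and selflessness then passes through matrix amplifications and inductive limits. The paper handles both of these steps simply by citing Theorems 4.3 and 4.1 of \cite{Rob25}, so the tentative direct arguments you sketch for (a) (the Haar-unitary reformulation and the Dykema-type corner construction) are not needed.
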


\begin{proof}
    By Proposition~\ref{P:limit}, $C(X)\rtimes_\lambda G$ is isomorphic to the inductive limit of a sequence of the form $M_{[G:G_n]}(C^*_\lambda(G_n))$ where $G_n \le G$ is finite-index. Since selflessness is preserved under matrix amplifications and inductive limits by Theorems 4.3 and 4.1 of \cite{Rob25}, the result follows by our assumption. 
\end{proof}

\begin{cor}\label{C:self}
    Let $G$ be a countable discrete group. Suppose that $G$ is (at least) one of the following:
    \begin{enumerate}
        \item acylindrically hyperbolic with trivial amenable radical;
        \item a nontrivial linear group with trivial amenable radical; or
        \item a group admitting a topologically free extreme boundary action.
    \end{enumerate}
    Then for every subodometer action $G\curvearrowright X$ on a metrisable Stone space, the reduced crossed product $C(X)\rtimes_\lambda G$ is selfless. 
\end{cor}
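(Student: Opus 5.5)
By Theorem~\ref{T:self}, it suffices to check that for every finite-index subgroup $H \le G$, with $G$ in one of the classes (i)--(iii), the reduced group $C^*$-algebra $C^*_\lambda(H)$ is selfless. The argument has the same shape as the proof of Corollary~\ref{C:sr1pure}: first show that each class is closed under passing to finite-index subgroups, then cite the appropriate selflessness theorem for groups in that class.

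\textbf{Step 1: closure under finite-index subgroups.} Classes (ii) and (iii) have already been handled in the proof of Corollary~\ref{C:sr1pure}. For (ii), use linearity of subgroups together with \cite[Lemma 2.2]{RauThiVil25}. For (iii), use \cite[Proposition 1.3]{FloKliCobPag26}. For (i), finite-index subgroups of acylindrically hyperbolic groups are acylindrically hyperbolic by \cite[Lemma 3.9]{MinOsi15}. The additional hypothesis of trivial amenable radical also passes to finite-index subgroups, again by \cite[Lemma 2.2]{RauThiVil25}. The mechanism is that if $H \le G$ has finite index, then the amenable radical of $H$ is normalised by $G$ up to finitely many conjugates. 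The product of these finitely many conjugates is an amenable normal subgroup of $G$, hence trivial.

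\textbf{Step 2: selflessness of the reduced group $C^*$-algebras.} For (i), acylindrically hyperbolic groups with trivial amenable radical have selfless reduced group $C^*$-algebras by \cite{AmrGaoKunPat25}; see also \cite{FloKliCobPag26,Rau25,RauThiVil25}. Trivial amenable radical is exactly the hypothesis needed there, since it is equivalent to $C^*$-simplicity in this setting. For (ii), apply \cite[Theorem 1.4]{Vig26}. For (iii), apply \cite[Theorem 11]{Oza25}. These are the same selflessness results that were already used to deduce pureness and stable rank one in Corollary~\ref{C:sr1pure}. Combining Step 1 with Theorem~\ref{T:self} then completes the proof.

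The main obstacle is class (i). Unlike in Corollary~\ref{C:sr1pure}, pureness alone is not enough here, so one has to find the precise selflessness statement for acylindrically hyperbolic groups. It is also worth checking that the trivial amenable radical condition, which is needed to exclude finite normal subgroups that would break simplicity, really does persist to finite-index subgroups. The normal-core argument in Step 1 handles this, but I would verify it carefully. I would try to cite the selflessness theorem for this class directly rather than deduce it from pureness.
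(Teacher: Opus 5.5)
Your reduction matches the paper's. Theorem~\ref{T:self} reduces the statement to selflessness of $C^*_\lambda(H)$ for finite-index $H\le G$. The three classes are closed under finite-index subgroups, and you cite the same references as the paper, including \cite[Lemma 2.2]{RauThiVil25} for the amenable radical. Classes (ii) and (iii) are then handled by \cite{Vig26} and \cite{Oza25}, also as in the paper.

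The gap is in Step 2 for class (i).
\begin{itemize}
\item The selflessness theorem of \cite{AmrGaoKunPat25} assumes, besides acylindrical hyperbolicity and trivial amenable radical, that $G$ has the \emph{rapid decay} property.
\item This is a real restriction. For instance, $(\Zb/2\Zb\wr\Zb)*\Zb$ is acylindrically hyperbolic with trivial amenable radical. But it contains the lamplighter group as an undistorted amenable subgroup of exponential growth, so it fails rapid decay.
\item So, as cited, your Step 2 only covers the rapid decay subclass of (i).
\item The result you need is Ozawa's \cite{Oza25}, which removes the rapid decay hypothesis. It gives selflessness of $C^*_\lambda(G)$ for every acylindrically hyperbolic $G$ with trivial amenable radical, and it is the reference the paper uses for (i).
\end{itemize}

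For the same reason, your remark that these are ``the same selflessness results'' as in Corollary~\ref{C:sr1pure} is inaccurate for (i). There the paper uses \cite[Corollary D]{FloKliCobPag26}, which proves pureness and stable rank one with no hypothesis on the amenable radical. Pureness and stable rank one do not imply selflessness, so a separate input is genuinely required here. You correctly flagged this as the main obstacle, but then resolved it with a reference that does not cover the whole class.

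A minor point on Step 1: your sketch skips the nontrivial part, namely why the subgroup generated by the finitely many $G$-conjugates of the amenable radical of $H$ is amenable. Subgroups generated by amenable subgroups need not be amenable in general. Since you cite \cite[Lemma 2.2]{RauThiVil25}, this does not affect the argument.
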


\begin{proof}
    The proof is almost identical to the proof of Corollary~\ref{C:sr1pure}, noting that the added assumption of trivial amenable radical (which is preserved under passing to finite-index subgroups) for acylindrically hyperbolic groups ensures that their reduced group $C^*$-algebras are selfless \cite{Oza25}. Selflessness of the reduced group $C^*$-algebras of the other classes of groups is proved in \cite{Vig26} and \cite{Oza25}, respectively. 
\end{proof}

We have been unable to verify whether selflessness, pureness, or stable rank one automatically passes to finite-index subgroups. 

\begin{question}
    Let $G$ be a countable discrete group and $H\le G$ a finite-index subgroup. If $C^*_\lambda(G)$ is selfless, pure, or has stable rank one, does the same hold for $C^*_\lambda(H)$? 
\end{question}

\begin{rem}
    Stable rank one has previously been established for certain simple crossed products of actions of free product groups in work by Geffen, Kerr and the author via the method of square divisibility \cite{BelGefKer25}. Corollary~\ref{C:sr1pure} provides a more direct proof that the universal odometer action of $F_2$ yields a crossed product with stable rank one (cf.~\cite[Remark 9.4]{BelGefKer25}). We also point out that Corollary~\ref{C:sr1pure} encompasses actions which are not (weakly) squarely divisible, so the results of \cite{BelGefKer25} do not apply; see Example~\ref{E:con_sub}.
\end{rem}

\begin{example}\label{E:con_sub}
    By Selberg's theorem, $\operatorname{SL}(2,\Zb)$ has property $(\tau)$ with respect to the family of congruence subgroups $\Gamma(n) = \ker(\operatorname{SL}(2,\Zb) \to \operatorname{SL}(2,\Zb/n\Zb))$ (see \cite{Sel65,LubZuk05}). This implies (see \cite[Theorem 6]{AbeEle12}) that the odometer action $F_2\curvearrowright (\widehat{F_2})_\sigma$ with respect to the chain $\sigma = (\Gamma(n!)\cap F_2)_{n\in\Nb}$ (using that $F_2\le \operatorname{SL}(2,\Zb)$) has spectral gap and is strongly ergodic. By \cite[Proposition 3.15]{BelGefKer25}, the action is not weakly squarely divisible, but its crossed product nevertheless has stable rank one by Corollary~\ref{C:sr1pure}.
\end{example}

R{\o}rdam proved in \cite{Ror04} that a simple unital $\mathcal{Z}$-stable $C^*$-algebra is pure, and the Toms--Winter conjecture -- which has been proved in the monotracial case -- predicts that the converse holds among infinite-dimensional nuclear $C^*$-algebras. For nonnuclear $C^*$-algebras these properties often diverge, as demonstrated by the following. 

\begin{prop}\label{P:Zstable}
    Let $G$ be a countable discrete residually finite group that is not inner amenable. Then for every separating normal chain $\sigma$ in $G$, the Bunce--Deddens algebra $\BD (G,\sigma)$ is not $\mathcal{Z}$-stable. 
\end{prop}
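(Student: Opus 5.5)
The plan is to argue by contradiction. Suppose $A := \BD(G,\sigma)$ is $\mathcal{Z}$-stable, and let $\tau$ be its unique tracial state (Theorem~\ref{T:BD_properties}). Since $A$ is simple, unital and monotracial, $\mathcal{Z}$-stability gives central sequences. More precisely, it gives a unital embedding of $\mathcal{Z}$, hence of the dimension drop algebras $Z_{n,n+1}$, into the central sequence algebra $A_\infty \cap A'$.

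Pass to the tracial von Neumann completion $M := \pi_\tau(A)''$. Central sequences of $A$ that are bounded in norm remain central in $\|\cdot\|_2$ in $M$. Since $\mathcal{Z}$ has no nontrivial projections but does have elements of full spectrum, we get a non-trivial central sequence in $M$. For instance, take a positive contraction $h \in \mathcal{Z}$ whose spectrum, measured by the trace, has distribution not concentrated at a point. Its images give a $\|\cdot\|_2$-central sequence $(x_k)$ in $M$ with $\|x_k - \tau(x_k)\|_2 \not\to 0$. Hence $M$ is not full, i.e.\ it has property Gamma, or at least the asymptotic centraliser of $A$ in $M^\omega$ is nontrivial.

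Identify $M$ using the odometer structure. By Proposition~\ref{P:limit}, $A \cong \varinjlim M_{[G:G_n]}(C^*_\lambda(G_n))$. The trace $\tau$ restricts on each stage to $\mathrm{tr}\otimes\tau_{G_n}$ (the unique invariant measure gives each coset mass $1/[G:G_n]$). Equivalently, $M = L^\infty(\widehat{G}_\sigma,\mu)\rtimes G$, the group measure space algebra of the free, ergodic, measure-preserving (profinite) action. This contains $L(G)$ as a subalgebra, with conditional expectation $E$.

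The main step is to push central sequences of $M$ down to $L(G)$, i.e.\ to show that a non-trivial $\|\cdot\|_2$-central sequence of $M$ yields an asymptotically central sequence in $\ell^2(G\setminus\{e\})$ for the conjugation action, contradicting non-inner-amenability. Write $x_k = \sum_g a_{k,g} u_g$ with $a_{k,g} \in L^\infty(\widehat G_\sigma)$. Commutation with $u_h$ gives $\sum_g \|a_{k,g}\|_2^2\,\delta_g$ as an almost conjugation-invariant unit vector in $\ell^1(G)$, or $\xi_k(g)=\|a_{k,g}\|_2$ in $\ell^2(G)$. Non-inner amenability forces this mass to concentrate at $g=e$, so $x_k$ is asymptotically in $L^\infty(\widehat G_\sigma)$. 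Then commutation with $u_h$ makes $a_{k,e}$ asymptotically $G$-invariant.

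I expect the obstacle to be this last point. Ergodicity alone does not rule out almost invariant sets. Indeed, the profinite action is never strongly ergodic when the index grows, since the cylinder sets for $G_n$ are invariant under $G_n$ but not asymptotically invariant under all of $G$. So a central sequence of $M$ might live in $L^\infty$, and the argument needs more. I would first try using centrality also against $L^\infty(\widehat G_\sigma)$ itself: commutation with all of $A$, not just the $u_h$. Combined with the $\mathcal{Z}$-structure, the sequence coming from $\mathcal{Z}\subseteq A_\infty\cap A'$ can be chosen in $C^*_\lambda(G_n)$-type corners. Alternatively, I would use the finite-index subgroups directly: each $G_n$ is non-inner amenable as well. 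The aim is then to show that $x_k$ compressed to $M_{[G:G_n]}(L(G_n))$ must be trivial. Failing that, I would instead invoke the known result that $\mathcal{Z}$-stability of $C^*_\lambda(G_n)$-type corners forces a diffuse central sequence in $L(G_n)$, and transfer via the inductive limit.
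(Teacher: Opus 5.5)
There is a genuine gap, and it is exactly the one you anticipate. You use $\mathcal{Z}$-stability only to extract a single nontrivial $\|\cdot\|_2$-central sequence, i.e.\ property Gamma for $M=L^\infty(\widehat{G}_\sigma)\rtimes G$. Property Gamma of $M$ does not contradict non-inner amenability of $G$.

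Your Fourier-coefficient step is sound. It shows that when $G$ is not inner amenable, every central sequence satisfies $\|x_k-E(x_k)\|_2\to 0$, so $M'\cap M^\omega\subseteq L^\infty(\widehat{G}_\sigma)^\omega$. But nontrivial asymptotically invariant sequences in $L^\infty(\widehat{G}_\sigma)$ really do exist for some chains. Take a chain refining $\chi^{-1}(N_n\Zb)$, as in the proof of Corollary~\ref{C:C}. Then the odometer of $F_d$ factors onto a $\Zb$-odometer acting through $\chi$, and the almost invariant sets of that $\Zb$-odometer pull back. So $M$ has property Gamma even though $F_d$ is not inner amenable. (Your stated reason is not right, though: odometers can be strongly ergodic, as in Example~\ref{E:con_sub}.)

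So no argument that retains only Gamma can close, and your fallbacks do not supply the missing input. Commuting with the abelian algebra $L^\infty(\widehat{G}_\sigma)$ imposes nothing on sequences that already lie in it. Likewise, $\mathcal{Z}$-stability of the limit says nothing about the stages $M_{[G:G_n]}(C^*_\lambda(G_n))$, which are in fact not $\mathcal{Z}$-stable.

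The missing idea is to keep the noncommutativity of $\mathcal{Z}$. Compose the unital embedding $\mathcal{Z}\hookrightarrow A_\infty\cap A'$ with the canonical map $A_\infty\to M^\omega$. This map is well defined on norm-bounded sequences and lands in $M'\cap M^\omega$ by Kaplansky density. The composite is a unital $*$-homomorphism out of a simple $C^*$-algebra, hence injective. Thus $M'\cap M^\omega$ contains a copy of $\mathcal{Z}$ and is noncommutative, so $M$ is McDuff. This contradicts your inclusion $M'\cap M^\omega\subseteq L^\infty(\widehat{G}_\sigma)^\omega$, whose right-hand side is abelian.

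This is the paper's route: $\mathcal{Z}$-stability makes $\pi_\tau(A)''$ McDuff, so $G$ is a McDuff group in the sense of Deprez--Vaes, and McDuff groups are inner amenable. Your Fourier argument is essentially the proof of that last implication.
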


\begin{proof}
    Suppose for the sake of contradiction that $A:= \BD (G,\sigma)$ is $\mathcal{Z}$-stable. Then the von Neumann algebra $\pi_\tau(A)''\cong L^\infty(\widehat{G}_\sigma)\rtimes G$ generated by the GNS representation of the unique tracial state $\tau$ on $A$ is McDuff (see, e.g., \cite[Proposition 5.17]{CarCasEviGabSchTikWhi24}). This implies that $G$ is McDuff in the sense of \cite{DepVae18} and hence inner amenable \cite[Proposition 4.1]{DepVae18}. We arrive at the desired contradiction. 
\end{proof}

\begin{rem}\label{R:inn_amen}
    Well-known examples of groups that are not inner amenable, hence covered by Proposition~\ref{P:Zstable}, include all free products $G * H$ of nontrivial groups with $|H| > 2$ \cite{Eff75}, ICC property (T) groups \cite[Theorem 11]{AkeWal81}, and ICC acylindrically hyperbolic groups \cite[Theorem 2.35]{DahGuiOsi17}. 
\end{rem}

\section{$K$-theory for Bunce--Deddens algebras}\label{S:Kthy}

In this section we compute the $K$-theory of $\BD(F_d,\sigma)$, where $F_d$ is a finitely generated free group and $\sigma = (G_n)_{n\in\Nb}$ is a separating normal chain. A standard reference on $K$-theory is \cite{RorLarLau00}. 

\begin{notation}\label{N:chain}
For a separating normal chain $\sigma = (G_n)_{n\in \Nb}$ in $F_d$, put $m_n = [F_d: G_n]$ and $q_n = [G_n : G_{n+1}] = m_{n+1} / m_n$. Define 
\[
    \Qb_\sigma = \bigcup_{n\in \Nb} \frac{1}{m_n}\Zb \subseteq \Qb. 
\]
\end{notation}

\begin{lemma}\label{L:fin_stage}
    Let $G_n \le F_d$ be a subgroup, and write $m_n := [F_d:G_n]$. Then  
    \[
        K_0(C(F_d/G_n)\rtimes_\lambda F_d) \cong \Zb \quad \text{and} \quad K_1(C(F_d/G_n)\rtimes_\lambda F_d) \cong \Zb^{1+m_n(d-1)}.  
    \]
    Under the identification $K_0(C(F_d/G_n)\rtimes_\lambda F_d) \cong K_0(C^*_\lambda(G_n)) \cong \Zb$, the unit in $C(F_d/G_n)\rtimes_\lambda F_d$ corresponds to $m_n \in \Zb$.
\end{lemma}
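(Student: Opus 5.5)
The plan is to reduce everything to the $K$-theory of the reduced group $C^*$-algebra of a finitely generated free group. By Proposition~\ref{P:decomp} applied to the transitive action $F_d\curvearrowright F_d/G_n$, we have
\[
    C(F_d/G_n)\rtimes_\lambda F_d \cong M_{m_n}(C^*_\lambda(G_n)).
\]
Since $K$-theory is invariant under matrix amplification (the corner embedding $C^*_\lambda(G_n)\to M_{m_n}(C^*_\lambda(G_n))$ into the $(1,1)$ entry induces isomorphisms on $K_0$ and $K_1$), it suffices to compute $K_*(C^*_\lambda(G_n))$. By the Nielsen--Schreier theorem, a subgroup of index $m_n$ in $F_d$ is free of rank $1+m_n(d-1)$; this follows from an Euler characteristic count on the covering graph of the rose with $d$ petals, since $\chi(G_n)=m_n\chi(F_d)=m_n(1-d)$. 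Pimsner--Voiculescu's computation \cite{PimVoi80} then gives, for the free group $F_k$ with $k=1+m_n(d-1)$,
\[
    K_0(C^*_\lambda(F_k))\cong\Zb, \qquad K_1(C^*_\lambda(F_k))\cong \Zb^k.
\]
Here $K_0$ is generated by $[1]_0$, detected by the canonical tracial state, and $K_1$ is generated by the classes of the unitaries $\lambda(s_i)$ for the free generators $s_i$. This yields both displayed isomorphisms.

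For the statement about the unit, I will trace the isomorphism of Proposition~\ref{P:decomp} (from \cite[Proposition 2.3]{Sca20}) on the unit. The element $1=\sum_{gG_n\in F_d/G_n}1_{\{gG_n\}}$ decomposes into $m_n$ mutually orthogonal projections $1_{\{gG_n\}}\in C(F_d/G_n)$. Choosing coset representatives $g_1,\dots,g_{m_n}$, the partial isometries $u_{g_i}1_{\{G_n\}}$ implement Murray--von Neumann equivalences $1_{\{G_n\}}\sim 1_{\{g_iG_n\}}$. Moreover, the corner $1_{\{G_n\}}(C(F_d/G_n)\rtimes_\lambda F_d)1_{\{G_n\}}$ is exactly $C^*_\lambda(G_n)$, with unit $1_{\{G_n\}}$; this corner is the $(1,1)$ entry under the matrix identification. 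Hence $[1]_0=m_n[1_{\{G_n\}}]_0$, and $[1_{\{G_n\}}]_0$ corresponds to $[1_{C^*_\lambda(G_n)}]_0$, the generator $1\in\Zb$. So the unit corresponds to $m_n$. As a consistency check, the tracial state on the crossed product induced by the uniform measure gives $\tau(1_{\{G_n\}})=1/m_n$, which is compatible with this ratio.

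The main obstacle is bookkeeping rather than depth. One must make sure the chosen identification $K_0\cong\Zb$ is the one induced by the corner embedding, so that the generator is the class of a rank-one corner, and not some other normalisation. Concretely, I will verify that the Scarparo isomorphism restricts on the corner to the identity $C^*_\lambda(G_n)\to C^*_\lambda(G_n)$. The remaining ingredients, Nielsen--Schreier and Pimsner--Voiculescu, are quoted rather than reproved.
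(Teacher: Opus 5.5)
Your proposal is correct and follows essentially the same route as the paper: Proposition~\ref{P:decomp}, invariance of $K$-theory under matrix amplification, Nielsen--Schreier, and Pimsner--Voiculescu, with the unit giving $m_n$ times the generator. Your Murray--von Neumann bookkeeping on the crossed product side is more than you need, since any isomorphism onto $M_{m_n}(C^*_\lambda(G_n))$ is unital and $[1_{M_{m_n}}]_0$ corresponds to $m_n[1_{C^*_\lambda(G_n)}]_0$ under the corner embedding; also, the free group $K$-theory computation is in \cite{PimVoi82}, not \cite{PimVoi80}.
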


\begin{proof}
    By the Nielsen--Schreier formula, we have $G_n \cong F_{1 + m_n(d-1)}$. By \cite[Corollary 3.2]{PimVoi82}, $K_0(C^*_\lambda(F_r)) \cong \Zb[1]_0$ and $K_1(C^*_\lambda(F_r)) \cong \Zb^r$, so, since $K$-theory is invariant under matrix amplification, the result follows from Proposition~\ref{P:decomp}. The unit of $M_{m_n}(C^*_\lambda(G_n))$ represents $m_n[1_{C^*_\lambda(G_n)}]_0$, which corresponds to $m_n \in \Zb$ under the isomorphism $K_0(C^*_\lambda(G_n))\cong \Zb$.  
\end{proof}

\begin{thm}[Theorem~\ref{T:B}]\label{T:Kthy}
    Fix $d\ge 2$. Let $\sigma = (G_n)_{n\in\Nb}$ be a separating normal chain in $F_d$. Then, using Notation~\ref{N:chain}, 
    \[
        K_0(\BD(F_d,\sigma)) \cong \Qb_\sigma \subseteq \Qb, 
    \]
    via an isomorphism sending $[1]_0$ to $1$, and, writing $\beta_n$ for the map in $K$-theory induced by $C(F_d/G_n)\rtimes_\lambda F_d \to C(F_d/G_{n+1})\rtimes_\lambda F_d$,
    \[
        K_1(\BD(F_d,\sigma)) \cong \varinjlim (\Zb^{1 + m_n(d-1)},\beta_n).
    \] 
\end{thm}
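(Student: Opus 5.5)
The plan is to use the inductive limit decomposition of Proposition~\ref{P:limit}, namely $\BD(F_d,\sigma) \cong \varinjlim C(F_d/G_n)\rtimes_\lambda F_d$, where the connecting maps are induced by the equivariant surjections $F_d/G_{n+1}\to F_d/G_n$, i.e.\ by pulling back functions. Since $K$-theory commutes with inductive limits, we get $K_i(\BD(F_d,\sigma)) \cong \varinjlim K_i(C(F_d/G_n)\rtimes_\lambda F_d)$ for $i=0,1$. For $K_1$, combining this with Lemma~\ref{L:fin_stage} already gives the stated formula $\varinjlim(\Zb^{1+m_n(d-1)},\beta_n)$, and nothing further is claimed about the maps $\beta_n$. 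So all the real work is in $K_0$.

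For $K_0$, Lemma~\ref{L:fin_stage} gives $K_0$ of the $n$-th stage $\cong\Zb$, generated by the class of a minimal projection, with the unit corresponding to $m_n$. The key step is to identify the connecting maps $\beta_n$ on $K_0$. I would use the generator $[1_{\{G_n\}}]_0$, the class of the indicator of the trivial coset in $C(F_d/G_n)$. Under the isomorphism of Proposition~\ref{P:decomp}, this is the matrix unit $e_{11}\otimes 1$, i.e.\ it corresponds to $[1_{C^*_\lambda(G_n)}]_0$, which generates $\Zb$ by Pimsner--Voiculescu.

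Under the connecting map, $1_{\{G_n\}}$ pulls back to $\sum_{hG_{n+1}\subseteq G_n} 1_{\{hG_{n+1}\}}$, which is a sum of $q_n$ orthogonal projections. Each summand is unitarily equivalent to $1_{\{G_{n+1}\}}$ in the crossed product, via conjugation by $u_h$. Hence $\beta_n$ on $K_0$ is multiplication by $q_n$. This step is the main thing to verify carefully. The alternative route would be to check it through the explicit isomorphism of \cite[Proposition 2.3]{Sca20}, but the projection argument avoids this, since it only uses that the generator is the class of $1_{\{eG_n\}}$.

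It remains to assemble the limit. Identify the $n$-th copy of $\Zb$ with $\frac{1}{m_n}\Zb\subseteq\Qb$ via $k\mapsto k/m_n$. Since $q_n m_n = m_{n+1}$, the connecting maps $\beta_n$ become the inclusions $\frac{1}{m_n}\Zb\hookrightarrow\frac{1}{m_{n+1}}\Zb$. All these maps are injective, so the inductive limit is the union $\Qb_\sigma$. Finally, the unit at each stage is $m_n$ times the generator, which maps to $m_n/m_n=1$, so $[1]_0\mapsto 1$.
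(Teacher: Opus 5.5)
Your proposal is correct and follows essentially the same route as the paper: continuity of $K$-theory along the decomposition of Proposition~\ref{P:limit}, Lemma~\ref{L:fin_stage} at each stage, and identification of the $K_0$ connecting maps as multiplication by $q_n$. The only difference is in that last identification. The paper reads it off from unitality of the connecting maps, since a homomorphism $\Zb\to\Zb$ sending $m_n$ to $m_{n+1}$ must be multiplication by $q_n$; you instead check it on $[1_{\{G_n\}}]_0$ via $u_h1_{\{G_{n+1}\}}u_h^*=1_{\{hG_{n+1}\}}$, which also works, and that this class is the generator already follows from $[1]_0=m_n[1_{\{G_n\}}]_0$, with no need for the explicit isomorphism of \cite{Sca20}.
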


\begin{proof}
    Applying Proposition~\ref{P:limit} to decompose $\BD(F_d,\sigma)$ into an inductive limit, by continuity of $K$-theory, we have, for $i\in \{0,1\}$, 
    \begin{equation}\label{E:cty}
        K_i(\BD(F_d,\sigma)) \cong \varinjlim K_i(C(F_d/G_n)\rtimes_\lambda F_d).
    \end{equation}
    Lemma~\ref{L:fin_stage} gives $K_0(C(F_d/G_n)\rtimes_\lambda F_d) \cong \Zb$. Since the connecting maps are unital, they are uniquely determined by the image of the unit. At the $n$th stage, the unit corresponds to $m_n \in \Zb$ and is sent to $m_{n+1} \in \Zb$. It follows that the connecting map $K_0(C(F_d/G_{n})\rtimes_\lambda F_d) \to K_0(C(F_d/G_{n+1})\rtimes_\lambda F_d)$ is given by multiplication by $q_n$. Therefore, 
    \[
        K_0(\BD(F_d,\sigma)) \cong \varinjlim (\Zb \stackrel{\times q_1}{\longrightarrow} \Zb \stackrel{\times q_2}{\longrightarrow} \Zb \longrightarrow \cdots) \cong \Qb_\sigma,
    \]
    where the copy of $\Zb$ at stage $n$ is identified with $\frac{1}{m_n}\Zb \subseteq \Qb$. Since the unit $[1]_0$ corresponds at each stage with $m_n \in \Zb$, it is mapped under this identification to $1 \in \Qb$. The $K_1$-group computation follows from Lemma~\ref{L:fin_stage} and (\ref{E:cty}). 
\end{proof}

\begin{rem}
    We do not require an explicit computation of the connecting maps $\beta_n$ in Theorem~\ref{T:Kthy}; however, after choosing free bases for the subgroups $G_n$, the maps $\beta_n$ may be written explicitly as integer matrices.  
\end{rem}

\begin{cor}[Corollary~\ref{C:C}]
    For each $d\ge 2$, there exist continuum many pairwise nonisomorphic Bunce--Deddens algebras over $F_d$.  
\end{cor}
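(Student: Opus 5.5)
The plan is to separate isomorphism classes with the ordered $K_0$-group and its unit, computed in Theorem~\ref{T:Kthy}. For a separating normal chain $\sigma$ in $F_d$, any isomorphism $\BD(F_d,\sigma)\cong\BD(F_d,\rho)$ is unital. It therefore induces a group isomorphism $K_0(\BD(F_d,\sigma))\to K_0(\BD(F_d,\rho))$ sending $[1]_0$ to $[1]_0$. Under the identifications of Theorem~\ref{T:Kthy}, this is an isomorphism $\Qb_\sigma\to\Qb_\rho$ of subgroups of $\Qb$ with $1\mapsto 1$. Any group homomorphism between subgroups of $\Qb$ containing $1$ is multiplication by the image of $1$, because it is $\Qb$-linear on the divisible hull. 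So the isomorphism is the identity and $\Qb_\sigma=\Qb_\rho$. Hence the subgroup $\Qb_\sigma\subseteq\Qb$ is an isomorphism invariant. It is determined by the supernatural number $\prod_p p^{\sup_n v_p(m_n)}$ with $m_n=[F_d:G_n]$.

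It remains to produce continuum many separating normal chains with pairwise distinct sets of primes dividing the indices. Fix a separating normal chain $(N_k)$ in $F_d$. For instance, take $N_k$ to be the intersection of all normal subgroups of index at most $k$. This subgroup has finite index because $F_d$ is finitely generated, and the $N_k$ are separating by residual finiteness. For each infinite set $P$ of primes, enumerate it as $p_1<p_2<\cdots$. Then let $H_k$ be the kernel of the composite $F_d\to\Zb^d\to(\Zb/p_1\cdots p_k\Zb)^d$, which is normal of index $(p_1\cdots p_k)^d$.

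Set $G_k^P=N_k\cap H_k$. This is a descending sequence of finite-index normal subgroups, and it is separating because $G_k^P\subseteq N_k$. The problem is that the indices $[F_d:N_k]$ contribute uncontrolled primes. The fix is to demand that $[F_d:G_k^P]$ be divisible by $p_j$ exactly when $p_j\in P$, modulo a fixed set of primes independent of $P$. That fails, since the primes dividing $[F_d:N_k]$ are all primes up to roughly $k$.

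Instead I would use only one direction: for $p\in P$, $p$ divides $[F_d:G_k^P]$ for large $k$, since $G_k^P\le H_k$. So $1/p\in\Qb_{\sigma_P}$. Conversely, I would avoid $N_k$ by building separating chains whose indices involve only primes from $P$. Every free group is residually $p$ for each prime $p$. Hence the lower exponent-$p_1$ central series terms $\gamma^{(p_1)}_k(F_d)$, which are characteristic of $p_1$-power index, already intersect trivially. Take $G_k^P=\gamma^{(p_1)}_k(F_d)\cap H_k$. This chain is normal and separating, and its indices are products of primes in $P$ only, because the index of an intersection of two normal subgroups divides the product of their indices. Therefore $\Qb_{\sigma_P}$ contains $1/p$ exactly for $p\in P$.

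To finish, choose continuum many infinite sets $P$ of primes that all contain $2$, use $p_1=2$, and require the sets to be pairwise distinct. The resulting groups $\Qb_{\sigma_P}$ are pairwise distinct, and so the algebras are pairwise nonisomorphic by the first step. The main obstacle is controlling the primes in the indices while keeping the chain separating; residual $p$-finiteness of free groups resolves it.
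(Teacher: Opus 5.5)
Your proof is correct, and the construction is essentially the paper's. Both proofs intersect a separating chain of $2$-power index with normal subgroups pulled back from an abelian quotient whose indices involve exactly the primes of the chosen set containing $2$. For the $2$-power chain you take the lower exponent-$2$ central series, and the paper an unspecified chain $R_n$ from residual $2$-finiteness. For the abelian quotient you use $F_d\to(\Zb/p_1\cdots p_k\Zb)^d$, and the paper uses $\chi^{-1}(N_n\Zb)$ for an epimorphism $\chi\colon F_d\to\Zb$.

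Where you genuinely differ is in how the algebras are told apart. The paper uses $K_0$ only as an abstract group. Choosing $N_n=(\prod_{q\in E_n}q)^n$ forces every prime of $S$ to occur with unbounded exponent, so $\Qb_{\sigma_S}=\Zb[1/q : q\in S]$, and $S$ is recovered as the set of primes $p$ for which this group is $p$-divisible. You instead use two facts: a $*$-isomorphism preserves $[1]_0$, and a homomorphism between subgroups of $\Qb$ fixing $1$ is the identity. Hence the subgroup $\Qb_\sigma\subseteq\Qb$ itself (equivalently the supernatural number of $\sigma$) is an invariant, and you only need to control which primes divide the indices, not their exponents.

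The unit step is essential for your examples. Each odd $p\in P$ appears in $[F_d:G_k^P]$ with exponent exactly $d$ for large $k$. So if $P$ and $P'$ differ by finitely many primes, $\Qb_{\sigma_P}$ and $\Qb_{\sigma_{P'}}$ are isomorphic as abstract groups, via multiplication by a suitable rational number. Your argument therefore yields the slightly stronger fact that distinct supernatural numbers give nonisomorphic algebras. The paper's examples instead have $K_0$-groups that remain pairwise nonisomorphic even after forgetting the unit.

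In a write-up, delete the abandoned attempt with the subgroups $N_k$.
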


\begin{proof}
    By Theorem~\ref{T:Kthy}, it suffices to construct continuum many separating normal chains $\sigma$ in $F_d$ for which the groups $\Qb_\sigma$ are pairwise nonisomorphic. Let $\P$ denote the set of prime numbers. Since $F_d$ is residually $2$-finite, there exists a separating normal chain $R_1\ge R_2\ge \cdots$ in $F_d$ such that $[F_d : R_n]$ is a power of $2$ for all $n\in \Nb$. Now let $S\subseteq \P$ be any subset with $2\in S$. Choose an increasing sequence of finite subsets $E_1\subseteq E_2 \subseteq \cdots \subseteq S$ with $2\in E_n$ for all $n$ and $\bigcup_{n\in\Nb} E_n = S$ and define $N_n = (\prod_{q\in E_n} q)^n$. Then $N_n \mid N_{n+1}$, the prime divisors of each $N_n$ lie in $S$, the primes appearing among the $N_n$'s are precisely the primes in $S$, and every prime in $S$ occurs with arbitrarily large exponent. Choose an epimorphism $\chi : F_d \to \Zb$ and set $L_n = \chi^{-1}(N_n\Zb)$. Then each $L_n\le F_d$ is normal, $[F_d : L_n] = N_n$ and $L_{n+1} \le L_n$ for all $n\in \Nb$, since $N_n\mid N_{n+1}$. Taking $G_n := R_n \cap L_n$ defines a normal chain $\sigma_S$ in $F_d$ that is separating since $\bigcap_{n\in \Nb} G_n \subseteq \bigcap_{n\in \Nb} R_n = \{e\}$. Since $G_n\subseteq L_n$, we have $N_n = [F_d : L_n] \mid [F_d : G_n] =: m_n$. Hence every prime in $S$ appears in the sequence $(m_n)_{n\in \Nb}$ with arbitrarily large exponent. Conversely, the map $F_d/(R_n\cap L_n) \to F_d/R_n \times F_d/L_n$ is injective, so $m_n$ divides $[F_d : R_n][F_d:L_n]$. The first factor $[F_d : R_n]$ is a power of $2$ by construction and $[F_d : L_n] = N_n$ has prime divisors in $S$. Since $2\in S$, no prime $p \in \P\setminus S$ divides any $m_n$. It follows that $\Qb_{\sigma_S} = \bigcup_{n\in \Nb} \frac{1}{m_n} \Zb = \Zb[1/q : q\in S]$. Since $\Qb_{\sigma_S} \not\cong \Qb_{\sigma_T}$ for any distinct subsets $S,T\subseteq \P$ and there exists a continuum of subsets of $\P$ containing $2$, this completes the proof. 
 \end{proof}

 We now record our real rank zero criterion (see \cite{BroPed91} for definitions).

\begin{prop}\label{P:rr0}
    Let $G$ be a countable discrete residually finite group and let $\sigma$ be a separating normal chain in $G$. Suppose that $A:= \BD(G,\sigma)$ is selfless. Writing $\tau$ for the unique tracial state on $A$, the Bunce--Deddens algebra $A$ has real rank zero if and only if $\tau_*(K_0(A))$ is dense in $\Rb$.
\end{prop}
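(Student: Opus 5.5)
The plan is to apply R{\o}rdam's criterion from \cite{Ror04}, in the form used for simple unital exact pure $C^*$-algebras. The relevant statement is the following. Let $A$ be a simple, separable, unital, exact $C^*$-algebra that has stable rank one and strict comparison, and whose tracial simplex is a single point $\tau$. Then $A$ has real rank zero if and only if $\tau_*(K_0(A))$ is dense in $\Rb$. The argument therefore reduces to checking these hypotheses for $A=\BD(G,\sigma)$, and then proving the two implications.

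\emph{Verifying the hypotheses.} By Theorem~\ref{T:BD_properties}, $A$ is simple, separable and unital, and it has the unique tracial state $\tau$. Selflessness implies stable rank one by \cite[Theorem 3.1]{Rob25}, and pureness by \cite[Theorem 5.13]{AntPerThiVil24}. For a simple unital algebra with a unique trace, pureness gives strict comparison of positive elements with respect to $\tau$. Since $A$ is exact and unital, quasitraces are traces, so there is no gap between comparison with respect to quasitraces and with respect to $\tau$.

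\emph{Real rank zero implies density.} Let $a\in A$ be a positive element with spectrum $[0,1]$. Real rank zero gives, inside each hereditary subalgebra $\overline{f(a)Af(a)}$, an approximate unit of projections. Choosing $f$ supported near $t$ for small intervals, one obtains nonzero projections of arbitrarily small trace. Simplicity and the fact that $\tau$ is faithful ensure these traces are strictly positive. Projections of trace at most $\varepsilon>0$ then generate a subgroup of $\tau_*(K_0(A))$ that is $\varepsilon$-dense, which gives density.

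\emph{Density implies real rank zero.} This is the main step, and it is R{\o}rdam's argument. Take a self-adjoint element $h$ and a nonzero hereditary subalgebra $B$; the goal is to produce projections in every hereditary subalgebra that approximate the relevant spectral data. By density there is a projection class of arbitrarily small trace, realized as $[p]-[q]$ with trace in $(0,\varepsilon)$. Using strict comparison together with stable rank one (cancellation, and the Cuntz semigroup $\mathrm{Cu}(A)$ having projections determined by their rank), this class can be realized as an actual projection inside $B$. Then one approximates by invertibles, or equivalently uses the criterion from \cite{BroPed91} that every hereditary subalgebra has an approximate unit of projections. The rank functions of positive elements are continuous-valued, and the dense image of $K_0$ lets one fit projections underneath the positive element $d_\tau(a)$ within any tolerance.

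I expect the main obstacle to be this last passage from small-trace $K_0$ classes to genuine projections inside arbitrary hereditary subalgebras. In the nonnuclear setting the Cuntz semigroup is less tractable, and one must check that R{\o}rdam's argument only uses strict comparison, stable rank one, and the unique trace. If the hypotheses of \cite{Ror04} are stated with $\mathcal{Z}$-stability, I would instead cite the pure version, for instance via the Cuntz-semigroup description $\mathrm{Cu}(A)\cong V(A)\sqcup(0,\infty]$, which is available for pure, stable rank one, monotracial algebras.
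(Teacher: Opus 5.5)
There is a genuine gap in your verification of the hypotheses. Your overall route is the paper's: check the hypotheses of R{\o}rdam's criterion \cite[Proposition 7.1]{Ror04} using \cite[Theorem 3.1]{Rob25} and Theorem~\ref{T:BD_properties}, then observe that $\operatorname{Aff}T(A)\cong\Rb$, so the pairing is $\tau_*$. Your sketches of the two implications are unnecessary, since R{\o}rdam's proposition is already a two-sided criterion stated under stable rank one and strict comparison, with no $\mathcal{Z}$-stability.

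The gap is the sentence ``since $A$ is exact and unital, quasitraces are traces''. Nothing gives you exactness. The canonical copy of $C^*_\lambda(G)$ sits unitally inside $A=C(\widehat{G}_\sigma)\rtimes_\lambda G$, and exactness passes to subalgebras, so $A$ is exact if and only if $G$ is exact. The proposition is stated for an arbitrary residually finite $G$, and residually finite non-exact groups exist (Osajda). The assumption that $A$ is selfless does not supply exactness either. For such $G$ your argument breaks exactly where you use exactness. Pureness only gives strict comparison with respect to all lower semicontinuous $2$-quasitraces, not with respect to the trace $\tau$. Moreover, R{\o}rdam's proposition itself carries an exactness hypothesis.

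The missing idea is that selflessness already controls the quasitraces. By \cite[Theorem 3.1]{Rob25}, a selfless tracial $C^*$-algebra has stable rank one, a unique $2$-quasitracial state (which is therefore $\tau$), and strict comparison with respect to it. Exactness enters R{\o}rdam's Proposition~7.1 only to guarantee that quasitraces are traces. So this lets you apply it without assuming $G$ is exact, which is exactly how the paper argues. As written, your proof is valid only for exact $G$. That case does cover the application to $F_d$ in Corollary~\ref{C:rr0}, but not the proposition in the generality stated.
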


\begin{proof}
    By \cite[Theorem 3.1]{Rob25}, selfless tracial $C^*$-algebras have stable rank one and strict comparison, and $A$ is unital and simple by Theorem~\ref{T:BD_properties}, hence \cite[Proposition 7.1]{Ror04} applies.\footnote{The exactness assumption is only used to ensure that all quasitraces are traces; selfless $C^*$-algebras also have a unique quasitrace \cite[Theorem 3.1]{Rob25}.} Since $T(A) = \{\tau\}$,  $\operatorname{Aff}T(A) \cong \Rb$ so the pairing map $\rho : K_0(A) \to \operatorname{Aff}T(A)$ coincides with the map $\tau_* : K_0(A) \to \Rb$ induced by $\tau$. The conclusion follows.     
\end{proof}

\begin{cor}\label{C:rr0}
    Fix $d\ge 2$. Let $\sigma$ be a separating normal chain in $F_d$. Then $\BD(F_d,\sigma)$ has real rank zero. 
\end{cor}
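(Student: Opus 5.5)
The plan is to apply Proposition~\ref{P:rr0}, so two things must be checked: that $A:=\BD(F_d,\sigma)$ is selfless, and that $\tau_*(K_0(A))$ is dense in $\Rb$.

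For selflessness, I will invoke Corollary~\ref{C:self}. The free group $F_d$ with $d\ge 2$ is acylindrically hyperbolic, being non-elementary hyperbolic, and it is torsion-free, so its amenable radical is trivial. (Alternatively, $F_d$ is a nontrivial linear group with trivial amenable radical.) The odometer action $F_d\curvearrowright \widehat{(F_d)}_\sigma$ is minimal and equicontinuous, so it is a subodometer by Proposition~\ref{P:min}. Corollary~\ref{C:self} then gives that $A$ is selfless.

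For density, I will use Theorem~\ref{T:Kthy}: $K_0(A)\cong \Qb_\sigma$ via an isomorphism sending $[1]_0$ to $1$. The main point is to identify $\tau_*$ with this inclusion $\Qb_\sigma\subseteq\Rb$. At stage $n$, $K_0$ of $M_{m_n}(C^*_\lambda(G_n))$ is generated by the class of a rank-one projection $e_{11}\otimes 1$. In the crossed product picture this is the indicator function $1_{gG_n}$ of a single coset of $G_n$, viewed as a clopen subset of $\widehat{(F_d)}_\sigma$. Its trace is the Haar measure of that cylinder set, namely $1/m_n$.

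Since $\tau$ is a state, $\tau_*$ is a group homomorphism $\Qb_\sigma\to\Rb$ with $\tau_*(1)=1$. Any homomorphism from a subgroup of $\Qb$ to $\Rb$ is determined by its value at $1$, because $\Rb$ is torsion-free. So $\tau_*$ is the inclusion $\Qb_\sigma\hookrightarrow\Rb$ regardless of the explicit computation above. Its image contains $\frac{1}{m_n}\Zb$ for every $n$.

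Finally, $m_n=[F_d:G_n]\to\infty$. The chain is separating in the infinite group $F_d$, so the indices cannot stabilise; a stabilised chain would give a finite-index subgroup equal to the intersection, which is $\{e\}$. Hence $\Qb_\sigma$ contains $\frac{1}{m_n}\Zb$ with mesh $\frac{1}{m_n}\to 0$, and is therefore dense in $\Rb$. Proposition~\ref{P:rr0} then yields real rank zero.

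I expect no serious obstacle. The only step needing care is the identification of $\tau_*$ with the inclusion, and the torsion-freeness argument above handles it.
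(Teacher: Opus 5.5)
Your proposal is correct and follows essentially the same route as the paper. You obtain selflessness from Corollary~\ref{C:self}, identify $K_0(A)\cong\Qb_\sigma$ with $\tau_*$ the inclusion via Theorem~\ref{T:Kthy}, get density from $m_n\to\infty$, and then apply Proposition~\ref{P:rr0}. Your argument identifying $\tau_*$ with the inclusion (from $[1]_0\mapsto 1$ and torsion-freeness of $\Rb$) spells out a step the paper only asserts.
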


\begin{proof}
    By Theorem~\ref{T:Kthy}, we have $K_0(\BD(F_d,\sigma)) \cong \Qb_\sigma$ and the trace pairing $\tau_* : \Qb_\sigma \to \Rb$ agrees with the inclusion $\Qb_\sigma \hookrightarrow \Rb$. Since $\sigma$ is separating, the indices $m_n$ are unbounded, so the subgroup $\Qb_\sigma \subseteq \Rb$ is not discrete and must therefore be dense. By Corollary~\ref{C:self}, $\BD(F_d,\sigma)$ is selfless, and so Proposition~\ref{P:rr0} now applies. 
\end{proof}

\begin{rem}
    Using the Baum--Connes conjecture, we expect one can carry out many more computations of $K$-theory for nonnuclear Bunce--Deddens algebras, along the lines of \cite{Dee23}, and therefore verify real rank zero. However, we do not pursue this line of inquiry here. 
\end{rem}

We at last have everything we need to prove Theorem~\ref{T:A}. 

\begin{proof}[Proof of Theorem~\ref{T:A}]
    By Theorem~\ref{T:BD_properties}, $A:=\BD (F_d,\sigma)$ is simple, separable, unital and admits a unique tracial state; since $F_d$ is nonamenable, it is also nonnuclear.  Since $F_d$ is not inner amenable, Proposition~\ref{P:Zstable} and Remark~\ref{R:inn_amen} shows that $A$ is not $\mathcal{Z}$-stable, and real rank zero is proved in Corollary~\ref{C:rr0}. Corollary~\ref{C:self} proves that $A$ is selfless, hence pure and with stable rank one; alternatively, Corollary~\ref{C:sr1pure} proves that $A$ is pure and has stable rank one.
\end{proof}

Finally, we remark that a positive resolution of the following question, posed by Thiel, would imply pureness for every nonnuclear Bunce--Deddens algebra.

\begin{question}
    Is $C^*_\lambda(G)$ pure if (and only if) $G$ is nonamenable?
\end{question}


\begin{thebibliography}{999}

\bibitem{AbeEle12}
M. Ab\'ert and G. Elek.
Dynamical properties of profinite actions.
{\it Ergodic Theory Dynam. Systems} {\bf 32} (2012), 1805--1835.

\bibitem{AkeWal81}
C.~A. Akemann and M. Walter.
Unbounded negative definite functions. 
{\it Canad. J. Math.} {\bf 33} (1981), 862--871.

\bibitem{AmrGaoKunPat25}
T. Amrutam, D. Gao, S. Kunnawalkam Elayavalli, and G. Patchell. 
Strict comparison in reduced group $C^*$-algebras.
{\it Invent. Math.} {\bf 242} (2025), 639--657. 

\bibitem{AntPerThiVil24}
R. Antoine, F. Perera, H. Thiel, and E. Vilalta.
Pure $C^*$-algebras.
arXiv:2406.11052. 

\bibitem{ArcSpi94}
R.~J. Archbold and J.~S. Spielberg.
Topologically free actions and ideals in discrete $C^*$-dynamical systems.
{\it Proc. Edinburgh Math. Soc. (2)} {\bf 37} (1994), 119--124. 

\bibitem{BelGefKer25}
J. Bell, S. Geffen, and D. Kerr.
Stable rank one in nonnuclear crossed products.
arXiv:2511.20132.

\bibitem{BroPed91}
L.~G. Brown and G.~K. Pedersen. 
$C^*$-algebras of real rank zero.
{\it J. Funct. Anal.} {\bf 99} (1991), 131--149.

\bibitem{BroOza08}
N.~P. Brown and N. Ozawa.
{\it $C^*$-Algebras and Finite-Dimensional Approximations}. 
Graduate Studies in Mathematics {\bf 88}. American Mathematical Society, Providence, RI, 2008.

\bibitem{BunDed75}
J.~W. Bunce and J.~A. Deddens.
A family of simple $C^*$-algebras related to weighted shift operators.
{\it J. Funct. Anal.} {\bf 19} (1975), 13--24.

\bibitem{CarCasEviGabSchTikWhi24}
J.~R. Carri\'on, J. Castillejos, S. Evington, J. Gabe, C. Schafhauser, A. Tikuisis, and S. White.
Tracially complete $C^*$-algebras.
To appear in {\it Mem. Amer. Math. Soc.}

\bibitem{Car11}
J.~R. Carri\'on.
Classification of a class of crossed product $C^*$-algebras associated with residually finite groups.
{\it J. Funct. Anal.} {\bf 260} (2011), 2815--2825.

\bibitem{CorHau26}
M.~I. Cortez and T. Hauser.
Minimal equicontinuous actions on Stone spaces.
arXiv:2602.05756.

\bibitem{CorMed16}
M.~I. Cortez and K. Medynets.
Orbit equivalence rigidity of equicontinuous systems.
{\it J. Lond. Math. Soc. (2)} {\bf 94} (2016), 545--556.

\bibitem{CorPet08}
M.~I. Cortez and S. Petite.
$G$-odometers and their almost 1-1 extensions.
{\it J. Lond. Math. Soc. (2)} {\bf 78} (2008), 1--20. 

\bibitem{DahGuiOsi17}
F. Dahmani, V. Guirardel, and D. Osin.
Hyperbolically embedded subgroups and rotating families in groups acting on hyperbolic spaces.
{\it Mem. Amer. Math. Soc.} {\bf 245} (2017), v+152.

\bibitem{Dee23}
R.~J. Deeley.
A counterexample to the HK-conjecture that is principal. 
{\it Ergodic Theory Dynam. Systems} {\bf 43} (2023), 1829--1846.

\bibitem{DepVae18}
T. Deprez and S. Vaes.
Inner amenability, property Gamma, McDuff $\mathrm{II}_1$ factors and stable equivalence relations. 
{\it Ergodic Theory Dynam. Systems} {\bf 38} (2018), 2618--2624.

\bibitem{Eff75}
E.~G. Effros.
Property ${\Gamma}$ and inner amenability.
{\it Proc. Amer. Math. Soc.} {\bf 47} (1975), 483--486.

\bibitem{Ell76}
G.~A. Elliott.
On the classification of inductive limits of sequences of semisimple finite-dimensional algebras.
{\it J. Algebra} {\bf 38} (1976), 29--44. 

\bibitem{Ell93}
G.~A. Elliott.
On the classification of $C^*$-algebras of real rank zero.
{\it J. Reine Angew. Math.} {\bf 443} (1993), 179--219.

\bibitem{EllGon96}
G.~A. Elliott and G. Gong.
On the classification of $C^*$-algebras of real rank zero. II.
{\it Ann. of Math. (2)} {\bf 144} (1996), 497--610. 

\bibitem{EviSib26}
S. Evington and P. Sibbel.
$C^*$-diagonals with Cantor spectrum in Cuntz algebras.
{\it J. Funct. Anal.} {\bf 290} (2026), paper no. 111418. 
 
\bibitem{FloKliCobPag26}
F. Flores, M. Klisse, M. \'{O} Cobhthaigh, and M. Pagliero.
Pureness and stable rank one for reduced twisted group $C^*$-algebras of certain group extensions. 
arXiv:2601.19758.

\bibitem{GarGefGesKopNar24}
E. Gardella, S. Geffen, R. Gesing, G. Kopsacheilis, and P. Naryshkin.
Essential freeness, allostery, and $\mathcal{Z}$-stability of crossed products.
arXiv:2405.04343.

\bibitem{GarGefKraNar23}
E. Gardella, S. Geffen, J. Kranz, and P. Naryshkin.
Classifiability of crossed products by nonamenable groups.
{\it J. Reine Angew. Math.} {\bf 797} (2023), 285--312.

\bibitem{GioPutSka19}
T. Giordano, I.~F. Putnam, and C.~F. Skau.
$\Zb^d$-odometers and cohomology.
{\it Groups Geom. Dyn.} {\bf 13} (2019), 909--938.

\bibitem{Gli60}
J.~G. Glimm.
On a certain class of $C^*$-algebras.
{\it Trans. Amer. Math. Soc.} {\bf 95} (1960), 318--340. 

\bibitem{HawSkaWhiZac13}
A. Hawkins, A. Skalski, S. White, and J. Zacharias.
On spectral triples on crossed products arising from equicontinuous actions.
{\it Math. Scand.} {\bf 113} (2013), 262--291. 

\bibitem{Jos24b}
M. Joseph.
Amenable wreath products with non almost finite actions of mean dimension zero.
{\it Trans. Amer. Math. Soc.} {\bf 377} (2024), 1321--1333.

\bibitem{KawTakTom90}
S. Kawamura, H. Takemoto, and J. Tomiyama. 
State extensions in transformation group $C^*$-algebras.
{\it Acta Sci. Math. (Szeged)} {\bf 54} (1990), 191--200. 

\bibitem{KerPet26}
D. Kerr and S. Petrakos.
Topological full groups and stable rank one.
arXiv:2607.04300.

\bibitem{LubZuk05}
A. Lubotzky and A. Zuk.
On property $(\tau)$.
{\it Notices Amer. Math. Soc.} {\bf 52} (2005), 626--627.

\bibitem{MinOsi15}
A. Minasyan and D. Osin.
Acylindrical hyperbolicity of groups acting on trees.
{\it Math. Ann.} {\bf 362} (2015), 1055--1105.

\bibitem{MinOsi19}
A. Minasyan and D. Osin.
Correction to: Acylindrical hyperbolicity of groups acting on trees.
{\it Math. Ann.} {\bf 373} (2019), 895--900.

\bibitem{Orf10}
S. Orfanos.
Generalized Bunce--Deddens algebras.
{\it Proc. Amer. Math. Soc.} {\bf 138} (2010), 299--308. 

\bibitem{Oza25}
N. Ozawa.
Proximality and selflessness for group $C^*$-algebras.
arXiv:2508.07938.

\bibitem{PerThiVil25}
F. Perera, H. Thiel, and E. Vilalta.
Extensions of pure $C^*$-algebras.
arXiv:2506.10529. 

\bibitem{Phi05}
N.~C. Phillips.
Crossed products of the Cantor set by free minimal actions of $\Zb^d$.
{\it Comm. Math. Phys.} {\bf 256} (2005), 1--42.

\bibitem{Pil23}
S.~J. Pilgrim.
Isometric actions and finite approximations.
{\it Ergodic Theory Dynam. Systems} {\bf 43} (2023), 2464--2470. 

\bibitem{PimVoi80}
M.~V. Pimsner and D. Voiculescu.
Exact sequences for $K$-groups and Ext-groups of certain cross-product $C^*$-algebras.
{\it J. Operator Theory} {\bf 4} (1980), 93--118.

\bibitem{PimVoi82}
M.~V. Pimsner and D. Voiculescu.
$K$-groups of reduced crossed products by free groups.
{\it J. Operator Theory} {\bf 8} (1982), 131--156.

\bibitem{Put89}
I.~F. Putnam.
The $C^*$-algebras associated with minimal homeomorphisms of the Cantor set.
{\it Pacific J. Math.} {\bf 136} (1989), 329--353.

\bibitem{RauThiVil25}
S. Raum, H. Thiel, and E. Vilalta.
Strict comparison for twisted group $C^*$-algebras.
arXiv:2403.04649. 

\bibitem{Rau25}
S. Raum. 
Twisted group $C^*$-algebras of acylindrically hyperbolic groups have stable rank one. 
To appear in {\it Groups Geom. Dyn.}

\bibitem{Rie81}
M.~A. Rieffel.
$C^*$-algebras associated with irrational rotations.
{\it Pacific J. Math.} {\bf 93} (1981), 415--429. 

\bibitem{Rie83}
M.~A. Rieffel.
Dimension and stable rank in the $K$-theory of $C^*$-algebras.
{\it Proc. Lond. Math. Soc. (3)} {\bf 46} (1983), 301--333.

\bibitem{Rob25}
L. Robert.
Selfless $C^*$-algebras.
{\it Adv. Math.} {\bf 478} (2025), paper no. 110409.

\bibitem{RorLarLau00}
M. Rørdam, F. Larsen, and N.~J. Laustsen.
{\it An Introduction to $K$-Theory for $C^*$-Algebras}.
London Mathematical Society Student Texts {\bf 49}.
Cambridge University Press, Cambridge, 2000.

\bibitem{Ror04}
M. R{\o}rdam.
The stable and the real rank of $\mathcal{Z}$-absorbing $C^*$-algebras.
{\it Internat. J. Math.} {\bf 15} (2004), 1065--1084.

\bibitem{Sca20}
E. Scarparo.
Homology of odometers.
{\it Ergodic Theory Dynam. Systems} {\bf 40} (2020), 2541--2551.

\bibitem{Sel65}
A. Selberg.
On the estimation of Fourier coefficients of modular forms.
In {\it Proceedings of Symposia in Pure Mathematics, Vol. VIII}, 1--15, Amer. Math. Soc., Providence, RI, 1965.

\bibitem{Vig26}
I. Vigdorovich.
Selfless reduced $C^*$-algebras of linear groups.
To appear in {\it Proc. Lond. Math. Soc. (3)}. 

\bibitem{Whi22}
S. White.
Abstract classification theorems for amenable $C^*$-algebras.
In {\it Proceedings of the International Congress of Mathematicians 2022, Vol. 4. Sections 5--8}, 3314--3338, EMS Press, Berlin, 2022. 

\bibitem{Win12}
W. Winter.
Nuclear dimension and $\mathcal{Z}$-stability of pure $C^*$-algebras.
{\it Invent. Math.} {\bf 187} (2012), 259--342. 

\bibitem{Win19}
W. Winter.
Structure of nuclear $C^*$-algebras: from quasidiagonality to classification and back again.
In {\it Proceedings of the International Congress of Mathematicians 2018 (Rio de Janeiro), Vol. 3}, 1801--1823. World Sci. Publ., Hackensack, NJ, 2019. 

\end{thebibliography}
\end{document}